\documentclass[12pt]{article}
%%% パッケージ
\usepackage{algorithm}
\usepackage{algpseudocode}
\usepackage{amsfonts}
\usepackage{amsmath}
\usepackage{amssymb} % 実数Rの入力する
\usepackage{amsthm} % 定理を記述する
\usepackage[title]{appendix} % 単語"appendix"を挿入する（https://tex.stackexchange.com/questions/226481/appendix-section-title）
\usepackage{autobreak}
\usepackage{bigdelim} % 表の周りに波括弧とコメントをつける（https://motchy99.blog.fc2.com/blog-entry-78.html）
\usepackage{bm}
\usepackage{breqn}
\usepackage{cancel} % 数式の上からキャンセル線を引く
\usepackage{cases}
\usepackage{chngcntr}
\usepackage{colortbl} % 表に色付けする
\usepackage{comment}
\usepackage{empheq} % 連立方程式を書く（https://tex.stackexchange.com/questions/451687/main-label-in-a-system-of-equations）
\usepackage{enumerate}
\usepackage{fancyhdr} % 上側に線を引く
\usepackage{fancyvrb} % verbatimを使う
\usepackage{framed} % 枠囲いをする（その２）
%\doublespacing % ダブルスペース
\usepackage{graphicx}% 図表を挿入する [dvipdfmx]
\usepackage[dvipdfmx]{hyperref} % dvipdfmxが無いとリンクを貼れない
\hypersetup{
  colorlinks,
  linkcolor={red!50!black},
  citecolor={blue!50!black},
  urlcolor={blue!80!black}
}
\usepackage{mathrsfs} % 花文字
\usepackage{multirow} % 表で縦方向に結合する
\usepackage{setspace} % ダブルスペースを使う
\usepackage{subfig} % 図と表を並置する
\usepackage[dvipdfmx]{pdflscape} % 表の横向き配置をし、用紙も横向きにする（横向きにしたい箇所を\begin/end{landscape}で挟む）（https://oxon.hatenablog.com/entry/20120428/1335624229）
\usepackage{pgfplots} % hyperrefを使用するために必要
\usepackage{pxrubrica} % 圏点を入力（\kenten{~~~}）
\usepackage{url}

%% 見た目の設定
\usepackage[top=35truemm,bottom=35truemm,left=30truemm,right=30truemm]{geometry} % 余白の設定
\sloppy % \overfullを許さない　　

% --> 式番号に節番号を追加
\makeatletter

\@addtoreset{equation}{section}
\makeatother
% <--

% --> アルゴリズムの記述で、endを取り除く
\algtext*{EndWhile}
\algtext*{EndIf}
\algtext*{EndFor}
% <--

% --> 定理環境の設定
\newtheorem{theorem}{Theorem}[section]
\newtheorem{proposition}[theorem]{Proposition}%
\newtheorem{corollary}[theorem]{Corollary}
\newtheorem{lemma}[theorem]{Lemma}
\newtheorem{conjecture}[theorem]{Conjecture}
\newtheorem{example}[theorem]{Example}%
\newtheorem{remark}[theorem]{Remark}%
\newtheorem{definition}[theorem]{Definition}%

%%% コマンドの定義

\DeclareMathOperator*{\rank}{rank}
\DeclareMathOperator*{\conv}{conv}
\DeclareMathOperator*{\cl}{cl}

\DeclareMathOperator*{\setint}{int}

\DeclareMathOperator*{\Ext}{Ext}

\DeclareMathOperator*{\Diag}{Diag}
\DeclareMathOperator*{\svec}{svec}

\DeclareMathOperator*{\smat}{smat}
\DeclareMathOperator{\subjectto}{subject~to}

\newcommand{\minimize}{\mathop{\rm minimize}\limits}
\newcommand{\maximize}{\mathop{\rm maximize}\limits}
\newcommand{\relmiddle}[1]{\mathrel{}\middle#1\mathrel{}}
\newcommand{\overbar}[1]{\mkern 1.5mu\overline{\mkern-1.5mu#1\mkern-1.5mu}\mkern 1.5mu}

\def\DNN{\mathcal{DNN}}
\def\CP{\mathcal{CP}}
\def\COP{\mathcal{COP}}

\newcommand{\RNum}[1]{\uppercase\expandafter{\romannumeral #1\relax}} % ローマ数字（大文字）
\newcommand{\Rnum}[1]{\lowercase\expandafter{\romannumeral #1\relax}} % ローマ数字（小文字）

% --> Change the dagger signs on the footnote to the arabic ones.

\setcounter{footnote}{0}
% <--

\title{Generalizations of doubly nonnegative cones\\and their comparison}

\makeatletter
\let\@fnsymbol\@arabic
\makeatother

\author{
\normalsize
    Mitsuhiro Nishijima\thanks{Department of Industrial Engineering and Economics,
        Tokyo Institute of Technology, 2-12-1 Ookayama, Meguro-ku, 1528552, Tokyo, Japan.
        ({\tt nishijima.m.ae@m.titech.ac.jp, nakata.k.ac@m.titech.ac.jp}).}
\and
\normalsize
        Kazuhide Nakata\footnotemark[1]
        }

\begin{document}
\maketitle

\begin{abstract}\noindent
In this study, we examine the various extensions of the doubly nonnegative (DNN) cone, frequently used in completely positive programming (CPP) to achieve a tighter relaxation than the positive semidefinite cone.
To provide tighter relaxation for generalized CPP (GCPP) than the positive semidefinite cone, inner-approximation hierarchies of the generalized copositive cone are exploited to obtain two generalized DNN (GDNN) cones from the DNN cone.
This study conducts theoretical and numerical comparisons to assess the relaxation strengths of the two GDNN cones over the direct products of a nonnegative orthant and second-order or positive semidefinite cones.
These comparisons also include an analysis of the existing GDNN cone proposed by Burer and Dong.
The findings from solving several GDNN programming relaxation problems for a GCPP problem demonstrate that the three GDNN cones provide significantly tighter bounds for GCPP than the positive semidefinite cone.
\end{abstract}
\vspace{0.5cm}

\noindent
{\bf Key words. }Optimization, generalized doubly nonnegative cone, generalized completely positive cone, generalized completely positive programming
% \vspace{0.5cm}
%
% \noindent
% {\bf AMS Classification. }
% 90C20,  	%Quadratic programming
% 90C22,  	%Semidefinite programming
% 90C25, 	%Convex programming
% 90C26.  	%Nonconvex programming, global optimization

\section{Introduction}\label{sec:intro}
Completely positive programming (CPP), also known as copositive (COP) programming, is a class of conic programming with a completely positive (CP) cone or its dual, a COP cone.
CPP has received significant attention over the last few decades because it can be used to reformulate many NP-hard problems, such as the standard quadratic program (QP)~\cite{BDde2000}, the quadratic assignment problem~\cite{PR2009}, and nonconvex QP with binary and continuous variables~\cite{Burer2009}, as convex programming in a unified manner.

Recently, the concepts of CP and COP cones have been generalized in many ways. Examples include the generalized CP (GCP) $\CP(\mathbb{K})$ and COP (GCOP) $\COP(\mathbb{K})$ cones over a closed cone $\mathbb{K} \subseteq \mathbb{R}^n$,\footnote{
Some prior literature~\cite{BMP2016,Burer2012,GS2013} including the authors' paper~\cite{NN20XX}, do not use the term ``generalized'' when referring to the sets $\CP(\mathbb{K})$ and $\COP(\mathbb{K})$.
Nevertheless, we use this term to specifically highlight the distinction: whether $\mathbb{K}$ represents a nonnegative orthant or otherwise.} which are defined as
\begin{align}
\CP(\mathbb{K}) &= \left\{\sum_{i=1}^m\bm{a}_i\bm{a}_i^\top \relmiddle| \bm{a}_i\in \mathbb{K} \text{ for all $i = 1,\dots,m$}\right\}, \label{eq:def_CP}\\
\COP(\mathbb{K}) &\coloneqq \{\bm{A} \mid \text{$\bm{A}$ is a symmetric matrix and }\bm{x}^\top \bm{A}\bm{x}\ge 0\ \text{for all $\bm{x}\in \mathbb{K}$}\},\label{eq:def_COP}
\end{align}
respectively.
GCP and GCOP cones over a nonnegative orthant respectively reduce to CP and COP cones.
Hereafter, ``over $\mathbb{K}$" can be omitted when we need not specify it.
GCP programming (GCPP) is a class of conic programming with GCP or GCOP cones.
Because GCPP includes CPP, it can express not only the aforementioned problems, but also many other NP-hard problems.
For example, a rank-constrained semidefinite programming (SDP) problem~\cite{BMP2016}, which includes sensor network localization~\cite{NN2022} and the max-cut problem~\cite{GW1995} as a class, can be formulated as a problem with a GCP cone over a direct product of a nonnegative orthant and three positive semidefinite cones. To the best of our knowledge, rank-constrained SDP is not known to be representable as CPP. Therefore, solving GCPP but not CPP represents an important task.
In addition, under an assumption, the quadratically constrained QP (QCQP) can be equivalently reformulated as a problem with a GCP cone over a direct product of a nonnegative orthant and second-order or positive semidefinite cones~\cite{BD2012}.
Other NP-hard problems that GCPP can represent include nonconvex conic QP with binary and continuous variables~\cite{Burer2012} (e.g., a variable selection problem in linear regression~\cite{MT2015}) and $k$-means clustering~\cite{PH2018}.
However, as GCPP can represent such formidable problems equivalently, it is also difficult to solve directly.

Some approximation hierarchies for the GCOP cone have been developed to solve GCPP approximately~\cite{NN20XX,ZVP2006}.
In a prior work, we solved approximation problems obtained by replacing the GCOP cone with its inner-approximation hierarchies~\cite{NN20XX}.
Our results showed that the optimal values of approximation problems with the hierarchy provided by Zuluaga, Vera, and Pe\~{n}a~\cite{ZVP2006}, referred to as the Zuluaga--Vera--Pe\~{n}a (ZVP)-type hierarchy, and those of problems with a hierarchy we provided, referred to as a Nishijima--Nakata (NN)-type hierarchy, were the same and almost identical to those of the original problems~\cite{NN20XX}.
This result poses a problem, i.e., whether the ZVP- and NN-type hierarchies are the same.

Of note, the two hierarchies are generalizations of the inner-approximation hierarchy provided by Parrilo~\cite{Parrilo2000} for the COP cone, and the dual cone of the zeroth level of Parrilo's hierarchy is known to be the \emph{doubly nonnegative (DNN) cone}, that is, the set of positive semidefinite matrices with only nonnegative elements~\cite[Section~5.3]{Parrilo2000}.
The DNN cone is frequently used because DNN programming relaxation can provide tighter bounds for CPP than SDP relaxation~\cite{PR2009,YM2010}.
Thus, generalizing the DNN cone to provide tighter bounds for GCPP than SDP relaxation is a natural approach.
Hence, we refer to the dual cone of the zeroth level of the ZVP- and NN-type hierarchies as the ZVP- and NN-type generalized DNN (GDNN) cones, respectively.
Regarding these sets as GDNN cones is justified in Section~\ref{sec:DNN}.

In the present work, we provide theoretical and numerical comparisons of the strength of relaxation by the above two GDNN cones as well as that proposed by Burer and Dong~\cite{BD2012} (the BD-type GDNN cone).
To compare these cones, we focus on the two specific cases in which (\Rnum{1}) $\mathbb{K}$ is a direct product of a nonnegative orthant and second-order cones, and (\Rnum{2}) $\mathbb{K}$ is a direct product of a nonnegative orthant and positive semidefinite cones.
As can be seen from the examples of rank-constrained SDP and QCQP, these two cases naturally arise in the context of optimization.
Specifically, we investigate the inclusion relationship between the three GDNN cones because inclusion is a crucial factor in the strength of relaxation.
When $\mathbb{K}$ is a direct product of a nonnegative orthant and second-order cones, no theoretical inclusion relationship is obtained between ZVP- and BD-type GDNN cones (see Examples~\ref{ex:inZVP_notinBD} and \ref{ex:inBD_notinZVP}), and numerically, the ZVP-type GDNN cone provides better relaxation than BD-type cone (Section~\ref{subsec:misocp}).
By contrast, when $\mathbb{K}$ is a direct product of a nonnegative orthant and positive semidefinite cones, the ZVP-type GDNN cone provides worse relaxation than the BD-type cone numerically (Section~\ref{subsec:max-cut}). In both cases, theoretically, the NN-type GDNN cone is strictly included in the ZVP-type cone (Theorems~\ref{thm:DNN_inclusion_NN_ZVP} and \ref{thm:DNN_inclusion_NN_ZVP_SDC}).
Solutions to the GDNN programming (GDNNP) relaxation of GCPP problems show that the three GDNN cones provide much tighter relaxation for GCPP than the positive semidefinite cone (Section~\ref{sec:experiment}).

In Section~\ref{sec:preliminaries}, we introduce the notation and concepts used throughout this paper.
In Section~\ref{sec:DNN}, we introduce two new GDNN cones and describe the BD-type cone.
In Section~\ref{sec:relationship}, we discuss the theoretical properties of the three GDNN cones.
In Section~\ref{sec:experiment}, we describe the results of numerical experiments conducted to investigate the effectiveness of relaxation by these cones.
In Section~\ref{sec:conclusion}, we present our conclusions and suggest potential future directions of research.

\section{Preliminaries}
\label{sec:preliminaries}
\subsection{Notation}
\label{subsec:notation}
We use $\mathbb{N}$, $\mathbb{R}$, and $\mathbb{S}^n$ to denote the set of nonnegative integers, the set of real numbers, and the space of real $n\times n$ symmetric matrices, respectively.
For $n\in\mathbb{N}$, let $T_n \coloneqq n(n+1)/2$.
We use $\bm{e}_i$, $\bm{0}$, and $\bm{1}$ to represent the vector with the $i$th element being $1$ and all other elements $0$, the zero vector, and the vector with all elements $1$, respectively.
In addition, we use $\bm{O}$ and $\bm{I}$ to represent the zero matrix and the identity matrix, respectively.
We sometimes use a subscript such as $\bm{0}_n$ and $\bm{I}_n$ to specify the size.
All vectors that appear in this paper are column vectors.
However, for notational convenience, the difference between a column and a row is omitted if it is clear from the context.
Let $\bm{A}\in\mathbb{S}^n$.
The $(i,j)$th element of $\bm{A}$ is expressed as $A_{i,j}$.
Moreover, for index sets $\mathcal{I},\mathcal{J}\subseteq \{1,\dots,n\}$, $\bm{A}_{\mathcal{I},\mathcal{J}}$ denotes the submatrix obtained by extracting the rows of $\bm{A}$ indexed by $\mathcal{I}$, and the columns indexed by $\mathcal{J}$.
We also refer to $\bm{A}_{\mathcal{I},\mathcal{J}}$ as the $(\mathcal{I},\mathcal{J})$th element of $\bm{A}$.
The space $\mathbb{R}^n$ is endowed with the usual transpose inner product, and $\|\cdot\|$ denotes the induced norm (2-norm).
The space $\mathbb{S}^n$ is endowed with the trace inner product defined by $\langle \bm{X},\bm{Y}\rangle \coloneqq \sum_{i,j=1}^n X_{i,j}Y_{i,j}$ for $\bm{X},\bm{Y}\in\mathbb{S}^n$. We use $S^n$ to denote the $n$-dimensional unit sphere in $\mathbb{R}^{n+1}$; i.e., $S^n = \{\bm{x}\in\mathbb{R}^{n+1}\mid \|\bm{x}\| = 1\}$.
For matrices $\bm{X}_i\in\mathbb{S}^{n_i}\ (i = 1,\dots,k)$, $\Diag(\bm{X}_1,\dots,\bm{X}_k)$ denotes the block diagonal matrix with the $i$th block $\bm{X}_i$.
For a set $\mathcal{X}$, we use $|\mathcal{X}|$, $\conv(\mathcal{X})$, $\cl(\mathcal{X})$, and $\setint(\mathcal{X})$ to denote the cardinality, convex hull, closure, and interior of $\mathcal{X}$, respectively.

The set $\mathcal{K}$ in a finite-dimensional real vector space is called a cone if $\alpha x\in\mathcal{K}$ for all $\alpha>0$ and $x\in\mathcal{K}$.
For a cone $\mathcal{K}$ in a finite-dimensional real inner product space, $\mathcal{K}^*$ denotes its dual cone; i.e., the set of $x$ such that the inner product between $x$ and $y$ is greater than or equal to $0$ for all $y\in\mathcal{K}$.
A cone $\mathcal{K}$ is said to be pointed if it contains no lines.
For a closed convex cone $\mathcal{K}$, $\Ext(\mathcal{K})$ denotes the set of elements generating extreme rays (one-dimensional faces) of $\mathcal{K}$.
The following properties of a cone and its dual are established:
\begin{theorem}[{\cite[Section~2.6.1]{BV2004}}]
Let $\mathcal{K}$, $\mathcal{K}_1$, and $\mathcal{K}_2$ be cones in a finite-dimensional real inner product space.
Then,
\begin{enumerate}[(i)]
\item $\mathcal{K}^*$ is a closed convex cone.
\item If $\mathcal{K}$ is a pointed closed convex cone, $\mathcal{K}^*$ has a nonempty interior.
\item If $\mathcal{K}$ is a closed convex cone, $(\mathcal{K}^*)^* = \mathcal{K}$.
\item If $\mathcal{K}_1 \subseteq \mathcal{K}_2$, $\mathcal{K}_2^* \subseteq \mathcal{K}_1^*$.
\end{enumerate}
\end{theorem}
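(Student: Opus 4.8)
The plan is to get (i) and (iv) straight from the definition of the dual cone, to prove the bipolar identity (iii) by a separation argument, and then to handle (ii) by an elementary perturbation for the first implication and by invoking (iii) for the converse. For (i), I would write $\mathcal{K}^{*}=\bigcap_{y\in\mathcal{K}}\{x:\langle x,y\rangle\ge 0\}$, a (possibly infinite) intersection of closed half-spaces through the origin; each such half-space is a closed convex cone, and closedness, convexity, and the cone property all survive arbitrary intersections, so $\mathcal{K}^{*}$ is a closed convex cone. For (iv), if $\mathcal{K}_{1}\subseteq\mathcal{K}_{2}$ and $x\in\mathcal{K}_{2}^{*}$, then $\langle x,y\rangle\ge 0$ for every $y\in\mathcal{K}_{2}$, hence for every $y\in\mathcal{K}_{1}$ as well, so $x\in\mathcal{K}_{1}^{*}$.

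For (iii), the inclusion $\mathcal{K}\subseteq(\mathcal{K}^{*})^{*}$ needs no hypothesis: for $x\in\mathcal{K}$ and any $y\in\mathcal{K}^{*}$ we have $\langle y,x\rangle\ge 0$ by the definition of $\mathcal{K}^{*}$, which is precisely the statement that $x\in(\mathcal{K}^{*})^{*}$. For the reverse inclusion I would argue by contraposition: given $x\notin\mathcal{K}$, closedness and convexity of $\mathcal{K}$ let the separating hyperplane theorem produce a nonzero $a$ with $\langle a,x\rangle<\langle a,y\rangle$ for all $y\in\mathcal{K}$. Since $\mathcal{K}$ is a nonempty closed cone it contains $\bm{0}$, so $\langle a,x\rangle<0$; and for a fixed $y\in\mathcal{K}$ the scalings $ty$ for $t>0$ also lie in $\mathcal{K}$, so $\langle a,x\rangle<t\langle a,y\rangle$ for all $t>0$, which forces $\langle a,y\rangle\ge 0$. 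Hence $a\in\mathcal{K}^{*}$ while $\langle a,x\rangle<0$, so $x\notin(\mathcal{K}^{*})^{*}$.

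For (ii), suppose first that $\mathcal{K}$ is solid and let $x\in\mathcal{K}^{*}\cap(-\mathcal{K}^{*})$, so that $\langle x,y\rangle=0$ for all $y\in\mathcal{K}$; choosing $y_{0}\in\setint(\mathcal{K})$ we have $y_{0}+\varepsilon x\in\mathcal{K}$ for small $\varepsilon>0$, whence $0=\langle x,y_{0}+\varepsilon x\rangle=\varepsilon\|x\|^{2}$ and therefore $x=\bm{0}$, i.e.\ $\mathcal{K}^{*}$ is pointed. For the converse, let $\mathcal{K}$ be a pointed closed convex cone and suppose for contradiction that $\mathcal{K}^{*}$ is not solid; then, being a convex set containing the origin with empty interior, $\mathcal{K}^{*}$ lies in some hyperplane $\{x:\langle a,x\rangle=0\}$ with $a\neq\bm{0}$, so $\langle\pm a,x\rangle\ge 0$ for every $x\in\mathcal{K}^{*}$, i.e.\ $\pm a\in(\mathcal{K}^{*})^{*}=\mathcal{K}$ by (iii)---contradicting that $\mathcal{K}$ is pointed. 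The main obstacle is (iii): it is the one place where the closed-convex hypothesis is genuinely used, and it leans on the separating hyperplane theorem together with the small bit of bookkeeping that turns a generic separator into one through the origin with $\mathcal{K}$ on its nonnegative side; the other three items are just direct manipulation of the defining inequalities.
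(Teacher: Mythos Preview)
Your proof is correct and follows the standard textbook approach. However, there is nothing to compare against: the paper does not prove this theorem at all---it simply states it as a known result and cites \cite[Sect.~2.6.1]{BV2004}, so no proof appears in the paper's body.

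One minor remark: the paper's definition of pointed reads $\mathcal{K}\cap(-\mathcal{K})=\emptyset$, which is almost certainly a typo for $\{\bm{0}\}$ (otherwise no closed cone, and in particular no dual cone, could ever be pointed). Your argument in (ii) correctly establishes $\mathcal{K}^{*}\cap(-\mathcal{K}^{*})=\{\bm{0}\}$, which is the intended conclusion.
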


We use $\mathbb{R}_+^n$ and $\mathbb{S}_+^n$ to denote the set of $n$-dimensional nonnegative vectors (nonnegative orthant) and the set of $n\times n$ symmetric positive semidefinite matrices (positive semidefinite cone), respectively.
Moreover, for $\bm{S}\in\mathbb{S}^n$, let $\mathbb{R}_+\bm{S} \coloneqq \{\alpha\bm{S}\mid \alpha\ge 0\}$.

We use $H^{n,m}$ to denote the set of homogeneous polynomials in $n$ variables of degree $m$ with real coefficients.
Let $\mathbb{I}_{=m}^n \coloneqq \{\bm{\alpha} \in \mathbb{N}^n\mid \sum_{i=1}^n\alpha_i=m\}$. $\mathbb{R}^{\mathbb{I}_{=m}^n}$ and $\mathbb{S}^{\mathbb{I}_{=m}^n}$ denote the $|\mathbb{I}_{=m}^n|$-dimensional Euclidean space with elements indexed by $\mathbb{I}_{=m}^n$, and the space of real $|\mathbb{I}_{=m}^n| \times |\mathbb{I}_{=m}^n|$ symmetric matrices with columns and rows indexed by $\mathbb{I}_{=m}^n$, respectively. $\mathbb{S}_+^{\mathbb{I}_{=m}^n}$ denotes the set of positive semidefinite matrices in $\mathbb{S}^{\mathbb{I}_{=m}^n}$.
$\Sigma^{n,2m}$ denotes the set of sums of squares (SOS) of elements in $H^{n,m}$; i.e., $\Sigma^{n,2m} = \conv\{\theta^2\mid\theta\in H^{n,m}\}$.
For $\bm{y}\in\mathbb{R}^{\mathbb{I}_{=2m}^n}$, $\bm{M}^{n,m}(\bm{y})\in\mathbb{S}^{\mathbb{I}_{=m}^n}$ is the matrix with $(\bm{\alpha},\bm{\alpha}')$th element $y_{\bm{\alpha}+\bm{\alpha}'}$ for $\bm{\alpha},\bm{\alpha}'\in \mathbb{I}_{=m}^n$.
For example, in the case of $n = 2$ and $m = 3$, we have
\begin{equation*}
\bm{M}^{2,3}(\bm{y}) = \bordermatrix{
& (3,0) &(2,1) & (1,2) & (0,3)\cr
(3,0) & y_{(6,6)} & y_{(5,1)} & y_{(4,2)} & y_{(3,3)} \cr
(2,1) & y_{(5,1)} & y_{(4,2)} & y_{(3,3)} & y_{(2,4)} \cr
(1,2) & y_{(4,2)} & y_{(3,3)} & y_{(2,4)} & y_{(1,5)} \cr
(0,3) & y_{(3,3)} & y_{(2,4)} & y_{(1,5)} & y_{(0,6)} \cr
}.
\end{equation*}
Then, we define $\mathcal{M}^{n,2m} \coloneqq \{\bm{y}\in \mathbb{R}^{\mathbb{I}_{=2m}^n}\mid \bm{M}^{n,m}(\bm{y})\in \mathbb{S}_+^{\mathbb{I}_{=m}^n}\}$, which is a closed convex cone.

For a closed cone $\mathbb{K}$ in $\mathbb{R}^n$, we define the GCP cone $\CP(\mathbb{K})$ as \eqref{eq:def_CP} and GCOP cone $\COP(\mathbb{K})$ as \eqref{eq:def_COP}, respectively.
By definition, $\CP(\mathbb{K}) \subseteq \mathbb{S}_+^n$ holds.
Moreover, $\CP(\mathbb{K})$ and $\COP(\mathbb{K})$ are mutually dual~\cite{SZ2003}.
We define $\mathcal{N}^n$ as the set of $n\times n$ symmetric matrices with only nonnegative elements, with the DNN cone $\mathbb{S}_+^n\cap \mathcal{N}^n$ denoted by $\DNN^n$.
By definition, $\CP(\mathbb{R}_+^n) \subseteq \DNN^n \subseteq \mathbb{S}_+^n$.
In addition, $\DNN^n$ and $\mathbb{S}_+^n + \mathcal{N}^n$ are mutually dual~\cite[Theorem~1.167]{SB2021}.
In particular, $\mathbb{S}_+^n + \mathcal{N}^n$ is a closed convex cone.

\subsection{Euclidean Jordan algebra and symmetric cone}
\label{subsec:EJA}
A finite-dimensional real vector space $\mathbb{E}$ equipped with a bilinear mapping (product) denoted by $\circ$ is called a Jordan algebra if it satisfies the following properties for all $x,y\in\mathbb{E}$:
\begin{enumerate}[(J1)]
\item $x \circ y = y \circ x$,
\item $x\circ ((x\circ x)\circ y) = (x\circ x)\circ(x\circ y)$.
\end{enumerate}
We assume in this paper that $\mathbb{E}$ has a (unique) identity element $e$ for the product. A Jordan algebra is called Euclidean if there exists an associative inner product $\bullet$; i.e., $(x\circ y)\bullet z = x\bullet(y\circ z)$ for all $x,y,z\in\mathbb{E}$.

We define the symmetric cone $\mathbb{E}_+$ associated with the Euclidean Jordan algebra $(\mathbb{E},\circ,\bullet)$ as $\{x\circ x\mid x\in\mathbb{E}\}$. Symmetric cones are known to be self-dual---i.e., $(\mathbb{E}_+)^* = \mathbb{E}_+$---and are therefore pointed closed convex cones with a nonempty interior.
Specifically, the identity element $e$ of $\mathbb{E}$ is in $\setint(\mathbb{E}_+)$~\cite[Theorem~\mbox{\RNum{3}.2.1}]{FK1994}.

The following three examples represent typical Euclidean Jordan algebras that appear frequently throughout this paper.
\begin{example}
[nonnegative orthant]\label{ex:nno}
Let $\mathbb{E}$ be the space $\mathbb{R}^n$.
If we define $\bm{x}\circ \bm{y} \coloneqq (x_1y_1,\dots,x_ny_n)$ and $\bm{x}\bullet \bm{y} \coloneqq \bm{x}^\top\bm{y}$ for $\bm{x},\bm{y}\in \mathbb{E}$, then $(\mathbb{E},\circ,\bullet)$ is a Euclidean Jordan algebra, and the associated symmetric cone $\mathbb{E}_+$ is $\mathbb{R}_+^n$. The identity element of the Euclidean Jordan algebra is $\bm{1}_n$.
\end{example}

\begin{example}
[second-order cone]\label{ex:soc}
Let $\mathbb{E}$ be the space $\mathbb{R}^n$ and for each $\bm{x} \in \mathbb{E}$, we express $(x_2,\dots,x_n)$ as $\bm{x}_{2:n}$, so that $\bm{x}$ can be simply expressed as $(x_1,\bm{x}_{2:n})$.
If we define $\bm{x}\circ \bm{y} \coloneqq (\bm{x}^\top\bm{y},x_1\bm{y}_{2:n} + y_1\bm{x}_{2:n})$ and $\bm{x}\bullet\bm{y} \coloneqq \bm{x}^\top\bm{y}$ for $\bm{x},\bm{y}\in \mathbb{E}$, then $(\mathbb{E},\circ,\bullet)$ is a Euclidean Jordan algebra, and the associated symmetric cone $\mathbb{E}_+$ is the (standard) second-order cone $\mathbb{L}^n$ defined as
\begin{equation*}
\mathbb{L}^n \coloneqq \begin{cases}
\mathbb{R}_+ & (n = 1),\\
\{\bm{x}\in\mathbb{R}^n \mid x_1 \ge \|\bm{x}_{2:n}\|\} & (n\ge 2).
\end{cases}
\end{equation*}
The identity element of the Euclidean Jordan algebra is $(1,\bm{0}_{n-1})$.
\end{example}

\begin{example}[positive semidefinite cone]\label{ex:sdc} Consider the space $\mathbb{S}^n$ of real $n\times n$ symmetric matrices.
If we define $\bm{X}\mathbin{\lozenge} \bm{Y} \coloneqq (\bm{X}\bm{Y} + \bm{Y}\bm{X})/2$ and $\bm{X}\mathbin{\blacklozenge} \bm{Y} \coloneqq \langle \bm{X},\bm{Y}\rangle$ for $\bm{X},\bm{Y}\in \mathbb{S}^n$, then $(\mathbb{S}^n,\lozenge,\blacklozenge)$ is a Euclidean Jordan algebra, and the associated symmetric cone is the positive semidefinite cone $\mathbb{S}_+^n$.
The identity element of the Euclidean Jordan algebra is $\bm{I}_n$.

We focus on the vectorized positive semidefinite cone in this paper.
We define the linear mapping $\svec\colon \mathbb{S}^n\to\mathbb{R}^{T_n}$ as
\begin{equation*}
\svec(\bm{X}) \coloneqq (X_{1,1},\sqrt{2}X_{1,2},X_{2,2},\dots,\sqrt{2}X_{1,n},\dots,\sqrt{2}X_{n-1,n},X_{n,n})
\end{equation*}
for each $\bm{X}\in\mathbb{S}^n$.
Then, the mapping $\svec(\cdot)$ is an isometry between $\mathbb{S}^n$ and $\mathbb{R}^{T_n}$; i.e., $\langle \bm{X},\bm{Y}\rangle = \svec(\bm{X})^\top\svec(\bm{Y})$ holds for all $\bm{X},\bm{Y}\in\mathbb{S}^n$.
Let $\smat(\cdot)$ denote the inverse mapping of $\svec(\cdot)$.
If we define $\mathbb{E}\coloneqq \mathbb{R}^{T_n}$, $\bm{x}\circ \bm{y} \coloneqq \svec(\smat(\bm{x})\mathbin{\lozenge}\smat(\bm{y}))$, and $\bm{x}\bullet \bm{y} \coloneqq \bm{x}^\top\bm{y}$ for $\bm{x},\bm{y}\in\mathbb{E}$, then $(\mathbb{E},\circ,\bullet)$ is a Euclidean Jordan algebra, and the associated symmetric cone $\mathbb{E}_+$ is $\svec(\mathbb{S}_+^n)$.
We also refer to the vectorized positive semidefinite cone $\svec(\mathbb{S}_+^n)$ as the positive semidefinite cone hereafter.
\end{example}

\section{Generalized Doubly Nonnegative Cone}\label{sec:DNN}
For a class of closed cones $\mathbb{K}$, we consider a GDNN cone $\DNN(\mathbb{K})$ over $\mathbb{K}$.
The following requirements are natural.
First, to regard $\mathcal{DNN}(\mathbb{K})$ as a \emph{generalization of the DNN cone}, we require $\DNN(\mathbb{R}_+^n) = \DNN^n$.
Second, we require that $\DNN(\mathbb{K})$ be a cone.
Third, we require $\CP(\mathbb{K}) \subseteq \DNN(\mathbb{K}) \subseteq \mathbb{S}_+^n$ to provide tighter relaxation than SDP for GCPP as the DNN cone does.

In this section, as described in Section~\ref{sec:intro}, we focus on the fact that the dual cone of the zeroth level of Parrilo's hierarchy is the DNN cone, thereby defining the dual cone of the zeroth level of two inner-approximation hierarchies~\cite{NN20XX,ZVP2006} for the GCOP cone as a GDNN cone.
In addition, we introduce the GDNN cone proposed by Burer and Dong~\cite{BD2012}.

\subsection{ZVP-type generalized doubly nonnegative cone}\label{subsec:ZVP_GDNN}
We first introduce the inner-approximation hierarchy proposed by Zuluaga, Vera, and Pe\~{n}a~\cite{ZVP2006} for $\COP(\mathbb{K})$, where $\mathbb{K}$ is a (closed) pointed semialgebraic convex cone.
Specifically, we suppose that $\mathbb{K}$ is expressed as
\begin{equation}
\{\bm{x}\in \mathbb{R}^n\mid \phi_i(\bm{x})\ge 0\ (i = 1,\dots,q)\}, \label{eq:K_semialg}
\end{equation}
where $\phi_i\in H^{n,m_i}$ for $i=1,\dots,q$, and that $\mathbb{K}$ is a pointed convex cone.
Then, there exists some $\bm{a}\in \mathbb{R}^n\setminus\{\bm{0}\}$ such that the following holds (see \cite[Section~6]{ZVP2006}):
\begin{align}
&\mathbb{K}\subseteq \{\bm{x}\in\mathbb{R}^n\mid \bm{a}^\top\bm{x}\ge 0\}, \label{eq:exposed_face_1}\\
&\mathbb{K} \cap \{\bm{x}\in\mathbb{R}^n\mid \bm{a}^\top\bm{x} = 0\} = \{\bm{0}\}. \label{eq:exposed_face_2}
\end{align}
In fact, the set of $\bm{a}$ satisfying \eqref{eq:exposed_face_1} and \eqref{eq:exposed_face_2} is exactly $\setint(\mathbb{K}^*)$, which is nonempty because $\mathbb{K}$ is a pointed closed convex cone.
From \eqref{eq:exposed_face_1}, the geometric property of $\mathbb{K}$ does not change even if we add the nonnegative constraint $\bm{a}^\top\bm{x}\ge 0$ with $\bm{a}\in \setint(\mathbb{K}^*)$ into \eqref{eq:K_semialg}.
Thus, we may assume that there is some $i$ such that $\phi_i(\bm{x}) = \bm{a}^\top\bm{x}$.
Such a redundant constraint is necessary to construct the inner-approximation hierarchy.
Note that a different choice of $\bm{a}$ may yield a different hierarchy even if the geometric property of $\mathbb{K}$ does not change. Namely, the hierarchy depends on the algebraic description of $\mathbb{K}$. Therefore, we may use the notation $(\mathbb{K};\bm{a})$ to emphasize the choice of $\bm{a}$.
Under the assumption on $\mathbb{K}$, the inner-approximation hierarchy presented by Zuluaga, Vera, and Pe\~{n}a for $\COP(\mathbb{K})$ is described as follows.

\begin{theorem}[{\cite[Proposition~17]{ZVP2006}}]\label{thm:ZVP_inner_approx_COP}
Let
\begin{equation}
E^{n,m}(\mathbb{K}) \coloneqq \conv\left\{\psi^2\prod_{j=1}^k\phi_{i_j}\relmiddle|
\begin{array}{l}
k\in \mathbb{N},\ m-\sum_{j=1}^km_{i_j}\in \mathbb{N}\text{ is even},\\
\psi\in H^{n,(m-\sum_{j=1}^km_{i_j})/2},\\
i_j\in \{1,\dots,q\}\ (j=1,\dots,k)
\end{array}
\right\}\label{eq:EnmK}
\end{equation}
and $\mathcal{K}_{{\rm ZVP},r}(\mathbb{K}) \coloneqq \{\bm{A}\in\mathbb{S}^n\mid (\bm{a}^\top\bm{x})^r\bm{x}^\top\bm{A}\bm{x}\in E^{n,r+2}(\mathbb{K})\}$ for each $r\in\mathbb{N}$.
The sequence $\{\mathcal{K}_{{\rm ZVP},r}(\mathbb{K})\}_r$ then satisfies the following conditions:
\begin{enumerate}[(i)]
\item $\mathcal{K}_{{\rm ZVP},r}(\mathbb{K}) \subseteq \mathcal{K}_{{\rm ZVP},r+1}(\mathbb{K}) \subseteq \COP(\mathbb{K})$ for all $r \in \mathbb{N}$.
\item $\setint\COP(\mathbb{K}) \subseteq \bigcup_{r=0}^{\infty}\mathcal{K}_{{\rm ZVP},r}(\mathbb{K})$.
\end{enumerate}
\end{theorem}

More generally, Zuluaga, Vera, and Pe\~{n}a proposed inner-approximation hierarchies for the cone of copositive homogeneous polynomials or tensors over a given cone, and we considered a similar approach~\cite{NN20XX}.
However, we restrict our scope to matrices in the present work in accordance with Burer and Dong~\cite{BD2012}.

Using this hierarchy, we define the ZVP-type GDNN cone as follows:

\begin{definition}
Let $\mathbb{K}$ be a pointed semialgebraic convex cone in $\mathbb{R}^n$. Then, we define $\mathcal{K}_{{\rm ZVP},0}(\mathbb{K})^*$ as the ZVP-type GDNN cone over $\mathbb{K}$, written as $\DNN_{\rm ZVP}(\mathbb{K})$.
\end{definition}

Before checking whether the ZVP-type GDNN cone satisfies the requirements mentioned at the beginning of Section~\ref{sec:DNN}, we provide an explicit expression of $\DNN_{\rm ZVP}(\mathbb{K})$.
Note that because the polynomial $\psi^2\prod_{j=1}^k\phi_{i_j}$ appearing in \eqref{eq:EnmK} has a degree of $m$, homogeneous polynomials of degree exceeding $2$ appearing in \eqref{eq:K_semialg} are ignored in the construction of $E^{n,2}(\mathbb{K})$ and therefore $\mathcal{K}_{{\rm ZVP},0}(\mathbb{K})$.
Thus, assuming that the degrees of homogeneous polynomials appearing in \eqref{eq:K_semialg} are at most $2$, we now express $\mathbb{K}$ as
\begin{equation}
\{\bm{x}\in\mathbb{R}^n\mid \bm{x}^\top \bm{Q}_i\bm{x} \ge 0\ (i=1,\dots,q_2),\ \bm{a}_i^\top\bm{x}\ge 0\ (i=1,\dots,q_1)\},\label{eq:semialg_set_quad}
\end{equation}
where $\bm{Q}_i\in \mathbb{S}^n$ for $i = 1,\dots,q_2$ and $\bm{a}_i\in\mathbb{R}^n$ for $i = 1,\dots,q_1$.
Recall that $\mathcal{K}_{{\rm ZVP},0}(\mathbb{K}) = \{\bm{A}\in \mathbb{S}^{n}\mid \bm{x}^\top \bm{A}\bm{x}\in E^{n,2}(\mathbb{K})\}$.

\begin{lemma}\label{lem:K_ZVP}
Suppose that $\mathbb{K}$ is expressed as \eqref{eq:semialg_set_quad}. Then,
\begin{equation*}
\mathcal{K}_{{\rm ZVP},0}(\mathbb{K}) = \mathbb{S}_+^n + \sum_{i=1}^{q_2}\mathbb{R}_+\bm{Q}_i + \sum_{1\le i< j\le q_1}\mathbb{R}_+ \bm{A}^{ij},
\end{equation*}
where $\bm{A}^{ij} \coloneqq (\bm{a}_i\bm{a}_j^\top + \bm{a}_j\bm{a}_i^\top)/2$.
\end{lemma}

\begin{proof}
Let $\bm{A} \in \mathcal{K}_{{\rm ZVP},0}(\mathbb{K})$. Note that we can replace the convex hull operator in \eqref{eq:EnmK} with the conical hull operator because the inner set of the right-hand side of \eqref{eq:EnmK} is a cone containing zero. Then, because $\bm{x}^\top \bm{A}\bm{x}\in E^{n,2}(\mathbb{K})$, there exist $\bm{\psi}_t\in\mathbb{R}^n$, $\psi_{2(t)}\in\mathbb{R}$, $i_{2(t)}\in\{1,\dots,q_2\}$, $\psi_{1(t)}\in\mathbb{R}$, and $i_{1(t)}$, $j_{1(t)} \in \{1,\dots,q_1\}$ such that
\begin{align}
\bm{x}^\top \bm{A} \bm{x} &=
\sum_{t}(\bm{\psi}_t^\top\bm{x})^2 + \sum_{t}\psi_{2(t)}^2(\bm{x}^\top \bm{Q}_{i_{2(t)}}\bm{x}) +  \sum_{t}\psi_{1(t)}^2(\bm{a}_{i_{1(t)}}^\top\bm{x}) (\bm{a}_{j_{1(t)}}^\top\bm{x})\label{eq:K_ZVP_1}\\
&= \bm{x}^\top\left(\sum_{t}\bm{\psi}_t\bm{\psi}_t^\top + \sum_{t\in T_1}\psi_{1(t)}^2\bm{a}_{i_{1(t)}}\bm{a}_{i_{1(t)}}^\top + \sum_{t}\psi_{2(t)}^2\bm{Q}_{i_{2(t)}} +
\sum_{t\in T_2}\psi_{1(t)}^2\bm{A}^{i_{1(t)}j_{1(t)}}\right)\bm{x},\label{eq:K_ZVP_2}
\end{align}
where $T_1 \coloneqq \{t\mid i_{1(t)} = j_{1(t)}\}$ and $T_2 \coloneqq \{t\mid i_{1(t)} \neq j_{1(t)}\}$.
The first summand in \eqref{eq:K_ZVP_1} corresponds to the case of $k$ appearing in \eqref{eq:EnmK} equal to $0$; in the second, $k=1$; and in the third, $k=2$.
Since \eqref{eq:K_ZVP_2} holds for all $\bm{x}$, it follows that
\begin{equation*}
\bm{A} = \sum_{t}\bm{\psi}_t\bm{\psi}_t^\top + \sum_{t\in T_1}\psi_{1(t)}^2\bm{a}_{i_{1(t)}}\bm{a}_{i_{1(t)}}^\top + \sum_{t}\psi_{2(t)}^2\bm{Q}_{i_{2(t)}} +
\sum_{t\in T_2}\psi_{1(t)}^2\bm{A}^{i_{1(t)}j_{1(t)}},
\end{equation*}
which belongs to $\mathbb{S}_+^n + \sum_{i=1}^{q_2}\mathbb{R}_+\bm{Q}_i + \sum_{1\le i < j\le q_1}\mathbb{R}_+ \bm{A}^{ij}$.

Conversely, let $\bm{A} \in \mathbb{S}_+^n + \sum_{i=1}^{q_2}\mathbb{R}_+\bm{Q}_i + \sum_{1\le i< j\le q_1}\mathbb{R}_+\bm{A}^{ij}$.
The matrix $\bm{A}$ can then be written as $\bm{P} + \sum_{i=1}^{q_2}\psi_{i} \bm{Q}_i + \sum_{1\le i< j\le q_1}\psi_{ij}\bm{A}^{ij} $, where $\bm{P} \in \mathbb{S}_+^n$, $\psi_i \ge 0$ for $i = 1,\dots,q_2$, and $\psi_{ij}\ge 0$ for $1\le i< j\le q_1$. Because $\bm{P} \in \mathbb{S}_+^n$, there exist $\bm{p}_1,\dots,\bm{p}_n\in\mathbb{R}^n$ such that $\bm{P} =\sum_{i=1}^n\bm{p}_i\bm{p}_i^\top$. Therefore,
\begin{equation*}
\bm{x}^\top \bm{A}\bm{x} = \sum_{i=1}^n(\bm{p}_i^\top\bm{x})^2 + \sum_{i=1}^{q_2}({\textstyle \sqrt{\psi_i}})^2\bm{x}^\top \bm{Q}_i\bm{x} + \sum_{1\le i< j\le q_1}({\textstyle \sqrt{ \psi_{ij}}})^2(\bm{a}_i^\top\bm{x})(\bm{a}_j^\top\bm{x}) ,
\end{equation*}
which belongs to $\mathcal{K}_{{\rm ZVP},0}(\mathbb{K})$.
\end{proof}

\begin{proposition}\label{prop:DNN_ZVP}
Suppose that $\mathbb{K}$ is expressed as \eqref{eq:semialg_set_quad}. Then,
\begin{equation*}
\DNN_{\rm ZVP}(\mathbb{K}) = \mathbb{S}_+^n \cap \bigcap_{i=1}^{q_2}\{\bm{X}\in\mathbb{S}^n\mid \langle \bm{Q}_i,\bm{X}\rangle \ge 0\} \cap \bigcap_{1\le i< j\le q_1}\{\bm{X}\in \mathbb{S}^n\mid \langle \bm{A}^{ij},\bm{X}\rangle \ge 0\}.
\end{equation*}
\end{proposition}

\begin{proof}
Note that for $\bm{S}\in\mathbb{S}^n$, we have $(\mathbb{R}_+ \bm{S})^* = \{\bm{X}\in \mathbb{S}^n\mid \langle \bm{S},\bm{X}\rangle \ge 0\}$. In addition, for any closed convex cones $\mathcal{K}_1,\dots,\mathcal{K}_k$, $(\sum_{i=1}^k\mathcal{K}_i)^* = \bigcap_{i=1}^k\mathcal{K}_i^*$ holds. Using these facts, we obtain the desired result by taking the dual in Lemma~\ref{lem:K_ZVP}.
\end{proof}

It is reasonable to call $\DNN_{\rm ZVP}(\mathbb{K})$ a GDNN cone.
First, when $\mathbb{K} = \mathbb{R}_+^n= \{\bm{x}\in\mathbb{R}^n \mid \bm{e}_i^\top\bm{x} \ge 0\ (i = 1,\dots,n)\}$ and $\bm{a}$ is given as $\bm{1}_n\in \setint(\mathbb{K}^*)$, the hierarchy $\{\mathcal{K}_{{\rm ZVP},r}(\mathbb{R}_+^n;\bm{1}_n)\}_r$ agrees with Parrilo's hierarchy (see \cite[Remark~2]{ZVP2006} for its reason), the $r$th level of which is described as
\begin{equation}
\left\{\bm{A}\in\mathbb{S}^n \relmiddle| (\bm{x}^\top\bm{x})^r\sum_{i,j=1}^nA_{i,j}x_i^2x_j^2 \in \Sigma^{n,2(r+2)}\right\}. \label{eq:Parrilo}
\end{equation}
This implies that $\DNN_{\rm ZVP}(\mathbb{R}_+^n;\bm{1}_n) = \DNN^n$.
Second, $\DNN_{\rm ZVP}(\mathbb{K})$ is a cone, as it is obtained by dualizing $\mathcal{K}_{{\rm ZVP},0}(\mathbb{K})$.
Third, it follows from Theorem~\ref{thm:ZVP_inner_approx_COP} that $\CP(\mathbb{K}) \subseteq \DNN_{\rm ZVP}(\mathbb{K})$ and from Proposition~\ref{prop:DNN_ZVP} that $\DNN_{\rm ZVP}(\mathbb{K}) \subseteq \mathbb{S}_+^n$.

Note that unlike $\mathbb{S}_+^n +\mathcal{N}^n$, which is closed, $\mathcal{K}_{{\rm ZVP},0}(\mathbb{K})$ is not closed in general.
See Appendix~\ref{apdx:example_ZVP_not_closed} for an example of $\mathbb{K}$ such that $\mathcal{K}_{{\rm ZVP},0}(\mathbb{K})$ is not closed.
Therefore, the dual cone of $\DNN_{\rm ZVP}(\mathbb{K})$ is not $\mathcal{K}_{{\rm ZVP},0}(\mathbb{K})$ itself, but instead its closure.
However, at least when $\mathbb{K}$ is a direct product of a nonnegative orthant and second-order or positive semidefinite cones, $\mathcal{K}_{{\rm ZVP},0}(\mathbb{K})$ is closed (see Sections~\ref{subsec:soc} and~\ref{subsec:sdc}).

\subsection{NN-type generalized doubly nonnegative cone}\label{subsec:NN_GDNN}
Let $(\mathbb{E},\circ,\bullet)$ be a Euclidean Jordan algebra where $\mathbb{E}$ is the space $\mathbb{R}^n$.
As noted above, we previously proposed an inner-approximation hierarchy for the GCOP cone over the symmetric cone $\mathbb{E}_+$~\cite{NN20XX}.

\begin{theorem}[{\cite[Theorem~3.3]{NN20XX}}]\label{thm:inner_approx_COP}
Let
\begin{equation*}
\mathcal{K}_{{\rm NN},r}(\mathbb{E}_+) \coloneqq \{\bm{A}\in\mathbb{S}^n \mid (\bm{x}^\top\bm{x})^r(\bm{x}\circ\bm{x})^\top\bm{A}(\bm{x}\circ\bm{x})\in\Sigma^{n,2(r+2)}\}
\end{equation*}
for each $r\in\mathbb{N}$.
Then, $\mathcal{K}_{{\rm NN},r}(\mathbb{E}_+)$ is a closed convex cone for each $r\in\mathbb{N}$, and the sequence $\{\mathcal{K}_{{\rm NN},r}(\mathbb{E}_+)\}_r$ satisfies the following conditions:
\begin{enumerate}[(i)]
\item $\mathcal{K}_{{\rm NN},r}(\mathbb{E}_+) \subseteq \mathcal{K}_{{\rm NN},r+1}(\mathbb{E}_+) \subseteq \COP(\mathbb{E}_+)$ for all $r \in \mathbb{N}$.
\item $\setint\COP(\mathbb{E}_+) \subseteq \bigcup_{r=0}^{\infty}\mathcal{K}_{{\rm NN},r}(\mathbb{E}_+)$.
\end{enumerate}
\end{theorem}

Using this hierarchy, we define the NN-type GDNN cone:

\begin{definition}
Let $(\mathbb{E},\circ,\bullet)$ be a Euclidean Jordan algebra where $\mathbb{E}$ is the space $\mathbb{R}^n$.
We then define $\mathcal{K}_{{\rm NN},0}(\mathbb{E}_+)^*$ as the NN-type GDNN cone over $\mathbb{E}_+$, written as $\DNN_{\rm NN}(\mathbb{E}_+)$.
\end{definition}

The set $\DNN_{\rm NN}(\mathbb{E}_+)$ may be considered a GDNN cone.
When the Euclidean Jordan algebra is taken as in Example~\ref{ex:nno}, the associated symmetric cone $\mathbb{E}_+$ is the nonnegative orthant $\mathbb{R}_+^n$ and the hierarchy $\{\mathcal{K}_{{\rm NN},r}(\mathbb{R}_+^n)\}_r$ agrees with Parrilo's hierarchy~\eqref{eq:Parrilo}.
This implies that $\DNN_{\rm NN}(\mathbb{R}_+^n) = \DNN^n$.
Moreover, $\DNN_{\rm NN}(\mathbb{E}_+)$ is a cone that includes $\CP(\mathbb{E}_+)$ for the same reason as does the ZVP-type GDNN cone.
Finally, for any $\bm{A} \in \mathbb{S}_+^n$, there exists $\bm{U}\in \mathbb{S}^n$ such that $\bm{A}$ can be decomposed into $\bm{U}^2$.
For each $\bm{x}\in\mathbb{E}$, because each element of $\bm{x}\circ \bm{x}$ is a quadratic homogeneous polynomial, we can write $\bm{x}\circ \bm{x}$ as $(\phi_1(\bm{x}),\dots,\phi_n(\bm{x}))$, where $\phi_i(\bm{x})\in H^{n,2}$ for $i = 1,\dots,n$.
Then, we have
\begin{equation*}
(\bm{x}\circ\bm{x})^\top \bm{A} (\bm{x}\circ\bm{x}) = \|\bm{U}(\bm{x}\circ\bm{x})\|^2
= \sum_{i=1}^n\left(\sum_{j=1}^nU_{i,j}\phi_j(\bm{x})\right)^2 \in \Sigma^{n,4}.
\end{equation*}
Therefore, $\mathbb{S}_+^n \subseteq \mathcal{K}_{{\rm NN},0}(\mathbb{E}_+)$, and so $\DNN_{\rm NN}(\mathbb{E}_+) \subseteq \mathbb{S}_+^n$ hold.

We should note, however, that a fully explicit expression of $\DNN_{\rm NN}(\mathbb{E}_+)$ has been unknown in the case of general symmetric cones.
According to the result presented in~\cite{NN20XX}, $\DNN_{\rm NN}(\mathbb{E}_+)$ is written as the \textit{closure} of a set defined by a positive semidefinite constraint.
We demonstrate that the closure operator can be removed in the case wherein $\mathbb{E}_+$ is a direct product of a nonnegative orthant and second-order or positive semidefinite cones.
Namely, in such cases, $\DNN_{\rm NN}(\mathbb{E}_+)$ is fully captured by a positive semidefinite constraint.
Note in addition that because $\mathcal{K}_{{\rm NN},0}(\mathbb{E}_+)$ is always closed as mentioned in Theorem~\ref{thm:inner_approx_COP}, the dual of $\DNN_{\rm NN}(\mathbb{E}_+)$ is precisely $\mathcal{K}_{{\rm NN},0}(\mathbb{E}_+)$.

\subsection{BD-type generalized doubly nonnegative cone}
Above, we propose two GDNN cones based on the inner-approximation hierarchies for the GCOP cone.
Independently, Burer and Dong~\cite{BD2012} proposed another GDNN cone over a closed convex cone without exploiting inner-approximation hierarchies.

\begin{definition}[{\cite[Section~4]{BD2012}}]\label{def:BD_GDNN}
For a closed convex cone $\mathbb{K}$ in $\mathbb{R}^n$, the BD-type GDNN cone $\DNN_{\rm BD}(\mathbb{K})$ over $\mathbb{K}$ is defined by $\mathbb{S}_+^n \cap \mathcal{N}(\mathbb{K})$, where $\mathcal{N}(\mathbb{K})\coloneqq \{\bm{X}\in\mathbb{S}^n\mid \bm{X}\bm{s}\in\mathbb{K}\ \text{for all $\bm{s}\in \Ext(\mathbb{K}^*)$}\}$.
\end{definition}

The set $\DNN_{\rm BD}(\mathbb{K})$ can also be considered a GDNN cone based on the requirements listed at the beginning of Section~\ref{sec:DNN}.
In fact, by the definition of $\DNN_{\rm BD}(\mathbb{K})$, it comprises a cone included in the positive semidefinite cone.
In addition, as discussed in \cite{BD2012}, $\DNN_{\rm BD}(\mathbb{R}_+^n) = \DNN^n$ holds and $\DNN_{\rm BD}(\mathbb{K})$ includes $\CP(\mathbb{K})$.

\section{Analysis on Three Generalized Doubly Nonnegative Cones}
\label{sec:relationship}
In the previous section, we provided two GDNN cones $\DNN_{\rm ZVP}(\mathbb{K})$ and $\DNN_{\rm NN}(\mathbb{K})$ and introduced another, $\DNN_{\rm BD}(\mathbb{K})$, proposed by Burer and Dong~\cite{BD2012}.
Note that for a general closed cone $\mathbb{K}$, some GDNN cones may not be defined, in which case we cannot compare the three GDNN cones.
That is, $\DNN_{\rm ZVP}(\mathbb{K})$ is defined for pointed semialgebraic convex cones, whereas $\DNN_{\rm NN}(\mathbb{K})$ is defined for symmetric cones.
Therefore, in this section, we consider two special cases to define all three cones.
In particular, we consider the case in which $\mathbb{K}$ is a direct product of a nonnegative orthant and second-order cones.
In addition, as a preliminary investigation, we also consider the case in which $\mathbb{K}$ is a direct product of a nonnegative orthant and positive semidefinite cones.
Recall that the sequence $\{\mathcal{K}_{{\rm ZVP},r}(\mathbb{R}_+^n;\bm{1}_n)\}_r$ agrees with Parrilo's hierarchy and the vector $\bm{1}_n$ is the identity element of the Euclidean Jordan algebra associated with $\mathbb{R}_+^n$, as shown in Example~\ref{ex:nno}.
Accordingly, we select the vector $\bm{a}\in \setint(\mathbb{K}^*)$ in the ZVP-type GDNN cone $\DNN_{\rm ZVP}(\mathbb{K};\bm{a})$ as the identity element of a Euclidean Jordan algebra associated with $\mathbb{K}$, and no longer specify it.

For the case in which $\mathbb{K}$ is a direct product of a nonnegative orthant and \emph{one} second-order cone, i.e., $\mathbb{K} = \mathbb{R}_+^{n_1} \times \mathbb{L}^{n_2}$, we the authors~\cite{NN20XX}, previously selected $\bm{a}\in \setint(\mathbb{K}^*)$ as in this paper, to construct a ZVP-type hierarchy $\{\mathcal{K}_{{\rm ZVP},r}(\mathbb{K};\bm{a})\}_r$.
Upon solving the approximation problems that resulted from substituting the GCOP cone with hierarchies such as ZVP- and NN-types, a \emph{numerical} comparison of these hierarchies was conducted.
In this study, we consider the case in which $\mathbb{K}$ is a direct product of a nonnegative orthant and \emph{multiple} second-order cones and investigate the inclusion relationship between the dual cone of the zeroth level of the ZVP- and NN-type hierarchies \emph{theoretically}.

\subsection{When $\mathbb{K}$ is a direct product of a nonnegative orthant and second-order cones}
\label{subsec:soc}
In this subsection, we consider the case in which $\mathbb{K}$ is a direct product of a nonnegative orthant and second-order cones---i.e., $\mathbb{K} = \mathbb{R}_+^{n_1}\times \prod_{h=2}^N\mathbb{L}^{n_h}$---and let $n \coloneqq \sum_{h=1}^Nn_h$.
For convenience, we reindex $(1,\dots,n)$ as $(11,\dots,1n_1,21,\dots,2n_2,\dots,N1,\dots,Nn_N)$; i.e., let $hi \coloneqq \sum_{k=1}^{h-1}n_k + i$ for $h = 1,\dots,N$ and $i = 1,\dots,n_h$. Moreover, we set $\mathcal{I}_h \coloneqq \{h1,\dots,hn_h\}$ for $h = 1,\dots,N$, $\mathcal{I}_h^- \coloneqq \mathcal{I}_h\setminus\{h1\}$ for $h = 2,\dots,N$, and $\mathcal{I}_{\ge 0}\coloneqq \mathcal{I}_1\cup \bigcup_{h=2}^N\{h1\}$.
Let $(\mathbb{E}_1,\circ_1,\bullet_1)$ be the Euclidean Jordan algebra associated with the nonnegative orthant $\mathbb{R}_+^{n_1}$ shown in Example~\ref{ex:nno}, and $(\mathbb{E}_h,\circ_h,\bullet_h)$ be the Euclidean Jordan algebra associated with the second-order cone $\mathbb{L}^{n_h}$ shown in Example~\ref{ex:soc} for $h = 2,\dots,N$.
Taking the direct product of the $N$ Euclidean Jordan algebras, we obtain the Euclidean Jordan algebra $(\mathbb{E},\circ,\bullet)$ with $\mathbb{E}_+ = \mathbb{K}$.
For each $\bm{x}\in\mathbb{E}$, the square $\bm{x}\circ\bm{x}$ is calculated as
\begin{equation*}
\bm{x}\circ\bm{x} = \left((x_I^2)_{I\in\mathcal{I}_1},\sum_{I\in\mathcal{I}_2}x_I^2,(2x_{21}x_I)_{I\in\mathcal{I}_2^-},\dots,\sum_{I\in\mathcal{I}_N}x_I^2,(2x_{N1}x_I)_{I\in\mathcal{I}_N^-}\right).
\end{equation*}
The identity element $\bm{e}$ of the Euclidean Jordan algebra $(\mathbb{E},\circ,\bullet)$ is the vector with $I$th $(I\in\mathcal{I}_{\ge 0})$ elements $1$ and all other elements $0$.

\subsubsection{Explicit expression of GDNN cones}
First, we provide a simpler explicit expression of the ZVP-type GDNN cone $\DNN_{\rm ZVP}(\mathbb{K})$.
Note that the cone $\mathbb{K}$ has the following semialgebraic representation:
\begin{equation}
\left\{\bm{x}\in\mathbb{R}^n \relmiddle|
\begin{array}{l}
\bm{e}_I^\top\bm{x} = x_I \ge 0\ (I\in\mathcal{I}_{\ge 0}),\\
\bm{e}^\top\bm{x} = \sum_{I\in\mathcal{I}_{\ge 0}}x_I \ge 0,\\
\bm{x}^\top\bm{J}_h\bm{x} = x_{h1}^2 - \sum_{I\in\mathcal{I}_h^-}x_I^2 \ge 0\ (h=2,\dots,N)
\end{array}\right\}, \label{eq:soc_semialgebraic}
\end{equation}
where $\bm{J}_{h}$ is the $n\times n$ matrix such that the $(h1,h1)$th element is $1$, the $(h2,h2),\dots,(hn_h,hn_h)$th elements are $-1$, and all other elements are $0$ for $h = 2,\dots,N$.
As mentioned in Section~\ref{subsec:ZVP_GDNN}, remember that the redundant constraint $\bm{e}^\top\bm{x} \ge 0$ is necessary to construct the ZVP-type hierarchy.
Consequently, Lemma~\ref{lem:K_ZVP} and Proposition~\ref{prop:DNN_ZVP} yield explicit expressions of $\mathcal{K}_{{\rm ZVP},0}(\mathbb{K})$ and $\DNN_{\rm ZVP}(\mathbb{K})$, respectively.
However, in this case, they can be expressed in a simpler way.

\begin{lemma}\label{lem:K_ZVP_SOC}
\begin{equation}
\mathcal{K}_{{\rm ZVP},0}(\mathbb{K})
= \mathbb{S}_+^n + \sum_{h=2}^N\mathbb{R}_+\bm{J}_h
+ \left\{\bm{N}\in\mathbb{S}^n\relmiddle|
N_{I,J} \begin{cases}
\ge 0& (I,J\in\mathcal{I}_{\ge 0},\ I\neq J)\\
= 0& (otherwise)
\end{cases}
\right\}.\label{eq:K_ZVP_SOC}
\end{equation}
\end{lemma}

Throughout this subsection, for simplicity, we express the last set in the right-hand side of \eqref{eq:K_ZVP_SOC} as $\mathcal{N}_{\ge 0}$.

\begin{proof}
Let $\bm{E}^{IJ} \coloneqq (\bm{e}_I\bm{e}_J^\top + \bm{e}_J\bm{e}_I^\top)/2$ and $\bm{E}_I \coloneqq (\bm{e}_I\bm{e}^\top + \bm{e}\bm{e}_I^\top)/2$.
By Lemma~\ref{lem:K_ZVP}, we have
\begin{equation*}
\mathcal{K}_{{\rm ZVP},0}(\mathbb{K}) = \mathbb{S}_+^n + \sum_{h=2}^N\mathbb{R}_+\bm{J}_h + \sum_{\substack{I< J\\I,J\in\mathcal{I}_{\ge 0}}}\mathbb{R}_+\bm{E}^{IJ} + \sum_{I\in\mathcal{I}_{\ge 0}}\mathbb{R}_+\bm{E}_I.
\end{equation*}
Therefore, it is sufficient to show that
\begin{equation*}
\mathbb{S}_+^n + \sum_{\substack{I< J\\I,J\in\mathcal{I}_{\ge 0}}}\mathbb{R}_+\bm{E}^{IJ} + \sum_{I\in\mathcal{I}_{\ge 0}}\mathbb{R}_+\bm{E}_I = \mathbb{S}_+^n + \mathcal{N}_{\ge 0}.
\end{equation*}
For each $I,J\in \mathcal{I}_{\ge 0}$ with $I < J$, since $\bm{E}^{IJ}$ is the matrix with $(I,J)$th and $(J,I)$th elements equal to $1/2$ and other elements $0$, we have $\bm{E}^{IJ} \in \mathcal{N}_{\ge 0}$.
In addition, for each $I \in \mathcal{I}_{\ge 0}$, we have
\begin{equation*}
\bm{E}_I = \bm{e}_I\bm{e}_I^\top + \sum_{\substack{J\in\mathcal{I}_{\ge 0}\\J\neq I}}\bm{E}^{IJ} \in \mathbb{S}_+^n + \mathcal{N}_{\ge 0}.
\end{equation*}
These imply the ``$\subseteq$" component.
Conversely, let $\bm{N}\in \mathcal{N}_{\ge 0}$.
Then,
\begin{equation*}
\bm{N} = \sum_{\substack{I< J\\I,J\in\mathcal{I}_{\ge 0}}}2N_{I,J}\bm{E}^{IJ}\in \sum_{\substack{I< J\\I,J\in\mathcal{I}_{\ge 0}}}\mathbb{R}_+ \bm{E}^{IJ},
\end{equation*}
which implies the ``$\supseteq$'' component, thereby completing the proof.
\end{proof}

Taking the dual in Lemma~\ref{lem:K_ZVP_SOC}, we have a simpler explicit expression of $\DNN_{\rm ZVP}(\mathbb{K})$.

\begin{proposition}\label{prop:GDNN_ZVP_SOC}
For $\mathbb{K} = \mathbb{R}_+^{n_1}\times \prod_{h=2}^N\mathbb{L}^{n_h}$, it follows that
\begin{equation*}
\DNN_{\rm ZVP}(\mathbb{K}) =
\mathbb{S}_+^n \cap \bigcap_{h=2}^N\{\bm{X}\in\mathbb{S}^n\mid \langle\bm{J}_h,\bm{X}\rangle\ge 0\} \cap\{\bm{N}\in\mathbb{S}^n\mid N_{I,J}\ge 0\ (I,J\in\mathcal{I}_{\ge 0},\ I\neq J)\}.
\end{equation*}
\end{proposition}

Next, we provide an explicit expression of the NN-type GDNN cone.
For $\bm{y} = (y_{\bm{\delta}})_{\bm{\delta} \in \mathbb{I}_{=4}^n} \in \mathbb{R}^{\mathbb{I}_{=4}^n}$, let $\bm{C}_0(\bm{y})\in\mathbb{S}^n$ be the matrix with the $(I,J)$th element:\footnote{
Although we only define the upper triangular blocks of $\bm{C}_0(\bm{y})$ in \eqref{def:C0}, the lower triangular blocks are further defined so that $\bm{C}_0(\bm{y})$ is a symmetric matrix.}
\begin{subequations}
\begin{alignat}{2}
&y_{2(\bm{e}_I+\bm{e}_J)} &\quad& (I,J\in\mathcal{I}_1), \label{def:eq:C1}\\
&\sum_{K\in\mathcal{I}_h}y_{2(\bm{e}_I+\bm{e}_K)} &\quad& (I\in\mathcal{I}_1,\ J = h1,\ h = 2,\dots,N), \label{def:eq:C2}\\
&2y_{2\bm{e}_I+\bm{e}_{h1}+\bm{e}_J} &\quad& (I\in\mathcal{I}_1,\ J\in\mathcal{I}_h^-,\ h = 2,\dots,N), \nonumber\\
&\sum_{K\in\mathcal{I}_g}\sum_{L\in\mathcal{I}_h} y_{2(\bm{e}_K+\bm{e}_L)} &\quad& (I=g1,\ J=h1,\ g,h=2,\dots,N),\label{def:eq:C8}\\
&\sum_{K\in\mathcal{I}_g} 2y_{2\bm{e}_K+\bm{e}_{h1}+\bm{e}_J} &\quad& (I=g1,\ J\in\mathcal{I}_h^-,\ g,h=2,\dots,N), \nonumber\\
&4y_{\bm{e}_{g1}+\bm{e}_I+\bm{e}_{h1}+\bm{e}_J} &\quad& (I\in\mathcal{I}_g^-,\ J\in\mathcal{I}_h^-,\ g,h=2,\dots,N).\nonumber
\end{alignat} \label{def:C0}
\end{subequations}

If we define $\mathcal{C}_0 \coloneqq \{\bm{C}_0(\bm{y})\mid \bm{y}\in \mathcal{M}^{n,4}\}$, we can show that $\mathcal{K}_{{\rm NN},0}(\mathbb{K}) = \mathcal{C}_0^*$ by the same argument as in \cite[Proposition~3.6]{NN20XX}.
Since $\bm{C}_0(\bm{y})$ is linear with respect to $\bm{y}$ and $\mathcal{M}^{n,4}$ is a convex cone, it follows that $\mathcal{C}_0$ is a convex cone.
Therefore, $\DNN_{\rm NN}(\mathbb{K}) = \cl\mathcal{C}_0$.
We now demonstrate that $\mathcal{C}_0$ is closed, and the closure operator can be removed.

\begin{proposition}\label{prop:C_is_closed}
The set $\mathcal{C}_0$ is closed.
\end{proposition}

\begin{proof}
Let $\{\bm{y}^{(k)}\}_k \subseteq \mathcal{M}^{n,4}$ and suppose that $\bm{C}_0(\bm{y}^{(k)})$ converges to some $\bm{C}_0^*$ in the limit $k\to\infty$. It follows from the definition of $\mathcal{M}^{n,4}$ that $\bm{M}^{n,2}(\bm{y}^{(k)})\in\mathbb{S}_+^{\mathbb{I}_{=2}^n}$, and in particular, the diagonal elements of $\bm{M}^{n,2}(\bm{y}^{(k)})$ are nonnegative. Hence, we see that
\begin{equation}
y_{2\bm{\gamma}}^{(k)} \ge 0\quad\text{for all $\bm{\gamma}\in \mathbb{I}_{=2}^n$}. \label{eq:nonneg_y}
\end{equation}
Since $\bm{C}_0(\bm{y}^{(k)}) \to \bm{C}_0^*$, each element of $\bm{C}_0(\bm{y}^{(k)})$ is bounded. Specifically,
\begin{enumerate}[(i)]
\item $\{y_{2(\bm{e}_I + \bm{e}_J)}^{(k)}\}_k$ is bounded for all $I,J\in\mathcal{I}_1$ (see \eqref{def:eq:C1}). \label{enum:1}

\item For each $h = 2,\dots,N$, $\{\sum_{J\in\mathcal{I}_h}y_{2(\bm{e}_I+\bm{e}_J)}^{(k)}\}$ is bounded for all $I\in\mathcal{I}_1$ (see \eqref{def:eq:C2}). Therefore, we see from \eqref{eq:nonneg_y} that $\{y_{2(\bm{e}_I+\bm{e}_J)}^{(k)}\}_k$ is bounded for all $I\in\mathcal{I}_1$ and $J\in\mathcal{I}_h$. \label{enum:3}

\item For each $g,h = 2,\dots,N$, $\{\sum_{I\in\mathcal{I}_g}\sum_{J\in\mathcal{I}_h} y_{2(\bm{e}_I+\bm{e}_J)}^{(k)}\}_k$ is bounded (see \eqref{def:eq:C8}). Therefore, $\{y_{2(\bm{e}_I+\bm{e}_J)}^{(k)}\}_k$ is bounded for all $I\in\mathcal{I}_g$ and $J\in\mathcal{I}_h$. \label{enum:5}
\end{enumerate}
Because $\mathbb{I}_{=2}^n = \{\bm{e}_I + \bm{e}_J \mid 1\le I\le J\le n\}$, it follows from \eqref{enum:1} to \eqref{enum:5} that $\{y_{2\bm{\gamma}}^{(k)}\}_k$ is bounded for all $\bm{\gamma} \in \mathbb{I}_{=2}^n$, which implies the boundedness of the diagonal elements of $\bm{M}^{n,2}(\bm{y}^{(k)})$.
Combining boundedness with the positive semidefiniteness of $\bm{M}^{n,2}(\bm{y}^{(k)})$ yields the boundedness of each element in $\bm{M}^{n,2}(\bm{y}^{(k)})$. Therefore, $\{\bm{y}^{(k)}\}_k$ is also bounded, as $y_{\bm{\delta}}^{(k)}$ is an element of $\bm{M}^{n,2}(\bm{y}^{(k)})$ for each $\bm{\delta} \in \mathbb{I}_{=4}^n$. Therefore, there exists a convergent subsequence $\{\bm{y}^{(k_r)}\}_r$ with a limit of $\bm{y}^*$. Then, because $\mathcal{M}^{n,4}$ is closed and $\bm{y}^{(k_r)} \in \mathcal{M}^{n,4}$, we have $\bm{y}^* \in \mathcal{M}^{n,4}$. Since $\bm{C}_0(\bm{y})$ is continuous with respect to $\bm{y}$, we obtain $\bm{C}_0(\bm{y}^{(k_r)}) \to \bm{C}_0(\bm{y}^*) = \bm{C}_0^*$.
Thus, $\mathcal{C}_0$ is closed.
\end{proof}

\begin{corollary}\label{cor:DNN_NN}
For $\mathbb{K} = \mathbb{R}_+^{n_1}\times \prod_{h=2}^N\mathbb{L}^{n_h}$, it follows that $\DNN_{\rm NN}(\mathbb{K}) = \mathcal{C}_0$.
\end{corollary}

\begin{proof}
It is clear from Proposition~\ref{prop:C_is_closed}.
\end{proof}

\begin{remark}\label{rem:DNN_NN}
For $\mathbb{K} = \mathbb{R}_+^{n_1}\times \prod_{h=2}^N\mathbb{L}^{n_h}$, in the same manner as \cite[Proposition~3.6]{NN20XX} and Corollary~\ref{cor:DNN_NN}, we can derive an explicit semidefinite representation of the dual cone of $\mathcal{K}_{{\rm NN},r}(\mathbb{K})$ for general cases of $r$.
The discussion can be extended to the case of tensors.
\end{remark}

\subsubsection{Inclusion relationship between GDNN cones}
The following subsubsection discusses the inclusion relationship between ZVP-, NN-, and BD-type GDNN cones.

To begin with, we show that $\DNN_{\rm NN}(\mathbb{K})$ is strictly included in $\DNN_{\rm ZVP}(\mathbb{K})$.
The proof requires the closedness of $\mathcal{K}_{{\rm ZVP},0}(\mathbb{K})$, which does not hold in the general case.
To prove closedness, we prepare an additional lemma that claims a sufficient condition that the Minkowski sum of pointed closed convex cones is closed.
Because it is a slight modification of \cite[Corollary~9.1.3]{Rockafellar1970}, the proof is omitted.

\begin{lemma}\label{lem:minlovski_closed}
Let $\mathcal{K}_i \subseteq \mathbb{S}^n\ (i = 1,\dots,m)$ be pointed closed convex cones.
Then, $\sum_{i=1}^m\mathcal{K}_i$ is closed if the following condition holds: for any $\bm{X}_i\in\mathcal{K}_i\ (i = 1,\dots,m)$, if $\sum_{i=1}^m\bm{X}_i = \bm{O}$, then $\bm{X}_i = \bm{O}$ for all $i = 1,\dots,m$.
\end{lemma}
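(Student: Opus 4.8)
The plan is to reduce the statement to a standard closedness criterion for sums of convex cones. First I would identify $\mathbb{S}^n$ with the Euclidean space $\mathbb{R}^{\tilde n}$ (for instance via $\svec(\cdot)$), so that the $\mathcal{K}_i$ become finitely many pointed closed convex cones in a Euclidean space and the usual machinery of recession cones applies. The recession cone of a closed convex cone is the cone itself, and pointedness means its lineality space is $\{\bm{O}\}$; hence the stated cancellation condition ``$\sum_{i=1}^m\bm{X}_i = \bm{O}$ with $\bm{X}_i\in\mathcal{K}_i$ forces every $\bm{X}_i = \bm{O}$'' is exactly the hypothesis under which \cite[Corollary~9.1.3]{Rockafellar1970} guarantees that $\mathcal{K}_1+\cdots+\mathcal{K}_m$ is closed (applied directly to the $m$ cones, or inductively to the pair $(\mathcal{K}_1,\ \mathcal{K}_2+\cdots+\mathcal{K}_m)$). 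This is the ``slight modification'' alluded to: one only transports the ambient space from $\mathbb{R}^n$ to $\mathbb{S}^n$ and notes that recession directions of a cone are its own elements, the pointedness assumption being precisely what makes the lineality-space form of Rockafellar's hypothesis collapse to the one stated here.

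For a self-contained argument I would instead proceed by sequential compactness. Take $\bm{Y}^{(k)} = \sum_{i=1}^m\bm{X}_i^{(k)}$ with $\bm{X}_i^{(k)}\in\mathcal{K}_i$ and $\bm{Y}^{(k)}\to\bm{Y}$; the goal is $\bm{Y}\in\sum_{i=1}^m\mathcal{K}_i$. Set $t_k \coloneqq \max_{1\le i\le m}\|\bm{X}_i^{(k)}\|$. If $\{t_k\}_k$ has a bounded subsequence, then along it every sequence $\{\bm{X}_i^{(k)}\}_k$ is bounded, so after passing to a further subsequence $\bm{X}_i^{(k)}\to\bm{X}_i$ for each $i$, with $\bm{X}_i\in\mathcal{K}_i$ by closedness; passing to the limit in $\bm{Y}^{(k)} = \sum_{i=1}^m\bm{X}_i^{(k)}$ gives $\bm{Y} = \sum_{i=1}^m\bm{X}_i\in\sum_{i=1}^m\mathcal{K}_i$, as desired.

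It remains to exclude $t_k\to\infty$. In that case normalize by $t_k$ (for large $k$, $t_k>0$): $\bm{Z}_i^{(k)}\coloneqq\bm{X}_i^{(k)}/t_k\in\mathcal{K}_i$ by the cone property, with $\|\bm{Z}_i^{(k)}\|\le 1$ and $\max_{1\le i\le m}\|\bm{Z}_i^{(k)}\| = 1$. Since there are only finitely many indices, some fixed $i_0$ attains this maximum along an infinite subsequence; passing to it and then, by boundedness, to a further subsequence, $\bm{Z}_i^{(k)}\to\bm{Z}_i\in\mathcal{K}_i$ for every $i$ (closedness), and $\|\bm{Z}_{i_0}\| = 1$. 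But $\sum_{i=1}^m\bm{Z}_i = \lim_k \bm{Y}^{(k)}/t_k = \bm{O}$ because $\{\bm{Y}^{(k)}\}$ is bounded while $t_k\to\infty$, so the hypothesis forces $\bm{Z}_{i_0} = \bm{O}$, contradicting $\|\bm{Z}_{i_0}\| = 1$. Hence $\{t_k\}$ is bounded and the previous paragraph applies.

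The main obstacle is the dichotomy on $\{t_k\}$ together with the unbounded branch: one must keep each normalized limit $\bm{Z}_i$ inside $\mathcal{K}_i$ (which uses both closedness and the cone property) and pin down the argmax index $i_0$ by pigeonhole so that the limiting identity $\sum_i\bm{Z}_i = \bm{O}$ is genuinely nontrivial. Everything else — the reindexing of $\mathbb{S}^n$ as a Euclidean space, and the extraction of convergent subsequences from bounded sequences — is routine. Note that the direct argument uses only closedness, convexity, the cone property, and the stated cancellation condition; the explicit pointedness assumption is needed only to phrase the invocation of \cite[Corollary~9.1.3]{Rockafellar1970}.
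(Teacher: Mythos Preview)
Your proposal is correct and your first paragraph is exactly the paper's approach: the paper simply remarks that the lemma is a slight modification of \cite[Corollary~9.1.3]{Rockafellar1970} and omits the proof. Your additional self-contained sequential-compactness argument is a valid elaboration that the paper does not provide.
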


\begin{lemma}\label{lem:KZVP_closed}
For $\mathbb{K} = \mathbb{R}_+^{n_1}\times \prod_{h=2}^N\mathbb{L}^{n_h}$, the convex cone $\mathcal{K}_{{\rm ZVP},0}(\mathbb{K})$ is closed.
\end{lemma}

\begin{proof}
Let $\bm{P}\in\mathbb{S}_+^n$, $t_2,\dots,t_N\ge 0$, and $\bm{N}\in\mathcal{N}_{\ge 0}$, and suppose that
\begin{equation*}
\bm{A} \coloneqq \bm{P} + \sum_{h=2}^Nt_h\bm{J}_h + \bm{N} = \bm{O}.
\end{equation*}
For every $h = 2,\dots,N$, because $A_{h1,h1} = P_{h1,h1} + t_h = 0$ and each term is nonnegative, we have $P_{h1,h1} = t_h = 0$.
Given that $t_h = 0$ for all $h = 2,\dots,N$, for each $I \in \mathcal{I}_1\cup\bigcup_{h=2}^N\mathcal{I}_h^-$, we have $A_{I,I} = P_{I,I} = 0$.
Therefore, the diagonal elements of $\bm{P}$ are all 0 and $\bm{P} = \bm{O}$, as $\bm{P}\in\mathbb{S}_+^n$. Finally, we obtain $\bm{A} = \bm{N} = \bm{O}$.
Obviously, $\mathbb{S}_+^n$ and $\mathbb{R}_+\bm{J}_h\ (h=2,\dots,N)$ are pointed closed convex cones.
In addition, because $\mathcal{N}_{\ge 0}\subseteq \mathcal{N}^n$, $\mathcal{N}_{\ge 0}$ is also a pointed closed convex cone. $\mathcal{K}_{{\rm ZVP},0}(\mathbb{K})$ is therefore closed by Lemma~\ref{lem:minlovski_closed}.
\end{proof}

\begin{lemma}\label{lem:DNNd_inclusion_ZVP_NN}
For $\mathbb{K} = \mathbb{R}_+^{n_1}\times \prod_{h=2}^N\mathbb{L}^{n_h}$, $\mathcal{K}_{{\rm ZVP},0}(\mathbb{K}) \subseteq \mathcal{K}_{{\rm NN},0}(\mathbb{K})$ holds.
Moreover, the inclusion holds strictly if $n_1 \ge 1$ and $n_2 \ge 2$.
\end{lemma}

\begin{proof}
Let
\begin{equation}
\bm{A} = \bm{P} + \sum_{h = 2}^Nt_h\bm{J}_{h} + \bm{N} \in \mathcal{K}_{{\rm ZVP},0}(\mathbb{K}), \label{eq:mat_in_K_ZVP_SOC}
\end{equation}
where $\bm{P}\in \mathbb{S}_+^n$, $t_2,\dots,t_N\ge 0$, and $\bm{N}\in \mathcal{N}_{\ge 0}$.
First, let $\bm{U}$ be a real $n\times n$ symmetric matrix such that $\bm{P} = \bm{U}^2$; then, $(\bm{x}\circ\bm{x})^\top \bm{P}(\bm{x}\circ\bm{x}) = \|\bm{U}(\bm{x}\circ\bm{x})\|^2 \in \Sigma^{n,4}$. Second, for each $h = 2,\dots,N$,
\begin{equation*}
(\bm{x}\circ\bm{x})^\top(t_h\bm{J}_h)(\bm{x}\circ\bm{x}) =  t_h\left(x_{h1}^2 - \sum_{I\in\mathcal{I}_h^-}x_I^2\right)^2 \in \Sigma^{n,4}.
\end{equation*}
Finally,
\begin{align*}
(\bm{x}\circ\bm{x})^\top \bm{N}(\bm{x}\circ\bm{x}) &= \sum_{I,J\in\mathcal{I}_1}N_{I,J}(x_Ix_J)^2 + 2\sum_{h=2}^N\sum_{I\in\mathcal{I}_1}\sum_{J\in\mathcal{I}_h}N_{I,h1}(x_Ix_J)^2\\
&\quad+ \sum_{g,h=2}^N\sum_{I\in\mathcal{I}_g}\sum_{J\in\mathcal{I}_h}N_{g1,h1}(x_Ix_J)^2 \in \Sigma^{n,4}.
\end{align*}
Therefore, $(\bm{x}\circ\bm{x})^\top \bm{A}(\bm{x}\circ\bm{x}) \in \Sigma^{n,4}$, which means that $\bm{A}\in \mathcal{K}_{{\rm NN},0}(\mathbb{K})$.

The following example shows that the inclusion holds strictly. Suppose that $n_1\ge 1$ and $n_2\ge 2$. Let $\bm{A}\in\mathbb{S}^n$ be a matrix with the $(11,21)$th, $(11,22)$th, $(21,11)$th, and $(22,11)$th elements $1$ and all other elements $0$.
Since
\begin{align*}
(\bm{x}\circ\bm{x})^\top \bm{A}(\bm{x}\circ\bm{x}) &= 2x_{11}^2\sum_{I\in\mathcal{I}_2}x_I^2 + 4x_{11}^2x_{21}x_{22}\\
&= 2x_{11}^2\left\{(x_{21}+x_{22})^2 + \sum_{I\in\mathcal{I}_2^-\setminus\{22\}}x_I^2\right\} \in \Sigma^{n,4},
\end{align*}
it follows that $\bm{A}\in \mathcal{K}_{{\rm NN},0}(\mathbb{K})$.

On the other hand, we assume that $\bm{A}\in\mathcal{K}_{{\rm ZVP},0}(\mathbb{K})$ and express $\bm{A}$ as \eqref{eq:mat_in_K_ZVP_SOC}.
Then, we have $0 = A_{11,11} = P_{11,11}$.
Combining $P_{11,11} = 0$ with $\bm{P}\in\mathbb{S}_+^n$ yields $P_{11,22} = 0$. Therefore, $A_{11,22}$ must be 0, which contradicts the definition of $\bm{A}$.
Hence, $\bm{A} \not\in \mathcal{K}_{{\rm ZVP},0}(\mathbb{K})$.
\end{proof}

\begin{theorem}\label{thm:DNN_inclusion_NN_ZVP}
For $\mathbb{K} = \mathbb{R}_+^{n_1}\times \prod_{h=2}^N\mathbb{L}^{n_h}$, $\DNN_{\rm NN}(\mathbb{K}) \subseteq \DNN_{\rm ZVP}(\mathbb{K})$ holds.
Moreover, the inclusion holds strictly if $n_1 \ge 1$ and $n_2 \ge 2$.
\end{theorem}

\begin{proof}
By taking the dual in Lemma~\ref{lem:DNNd_inclusion_ZVP_NN}, we have $\DNN_{\rm NN}(\mathbb{K}) \subseteq \DNN_{\rm ZVP}(\mathbb{K})$. We next show by contradiction that the strict inclusion $\DNN_{\rm NN}(\mathbb{K}) \subsetneq \DNN_{\rm ZVP}(\mathbb{K})$ holds under the assumption of $n_1 \ge 1$ and $n_2 \ge 2$. Assume that $\DNN_{\rm NN}(\mathbb{K}) = \DNN_{\rm ZVP}(\mathbb{K})$ holds.
By taking its dual, we have $\DNN_{\rm ZVP}(\mathbb{K})^* = \DNN_{\rm NN}(\mathbb{K})^*$.
Remember that the double dual of a closed convex cone agrees with itself (see Theorem~\mbox{2.1.\Rnum{3}}).
Since $\mathcal{K}_{{\rm ZVP},0}(\mathbb{K})$ and $\mathcal{K}_{{\rm NN},0}(\mathbb{K})$ are both closed convex cones (see Lemma~\ref{lem:KZVP_closed} and Theorem~\ref{thm:inner_approx_COP}, respectively), it follows from the definition of the ZVP- and NN-type GDNN cones that $\mathcal{K}_{{\rm ZVP},0}(\mathbb{K}) = \mathcal{K}_{{\rm NN},0}(\mathbb{K})$.
This contradicts the assumption and Lemma~\ref{lem:DNNd_inclusion_ZVP_NN}.
\end{proof}

We will demonstrate that, in general, there is no inclusion relationship between $\DNN_{\rm ZVP}(\mathbb{K})$ and $\DNN_{\rm BD}(\mathbb{K})$.
Before presenting such examples, we provide a more concise expression of $\mathcal{N}(\mathbb{K})$ appearing in $\DNN_{\rm BD}(\mathbb{K})$ for the case where $\mathbb{K}$ is a direct product of a nonnegative orthant and second-order cones.
It can be easily seen that the set $\mathfrak{J}$, defined as
\begin{equation*}
\mathfrak{J} \coloneqq \bigcup_{I\in\mathcal{I}_1}\{(\bm{e}_I,\bm{0}_{n_2},\dots,\bm{0}_{n_N})\} \cup \bigcup_{h=2}^N\{(\bm{0}_{n_1},\bm{0}_{n_2},\dots,1/2,\bm{v}/2,\dots,\bm{0}_{n_N}) \mid \bm{v}\in S^{n_h-2}\},
\end{equation*}
satisfies $\Ext(\mathbb{K}) = \{\alpha\bm{s} \mid \alpha > 0,\bm{s}\in\mathfrak{J}\}$.
Since the vector $\bm{X}\bm{s}$ is linear with respect to $\bm{s}$ and $\mathbb{K}$ is self-dual, we can equivalently represent $\mathcal{N}(\mathbb{K})$ as $\{\bm{X}\in\mathbb{S}^n\mid \bm{X}\bm{s}\in\mathbb{K}\ \text{for all $\bm{s}\in \mathfrak{J}$}\}$.

\begin{example}
[a matrix that is in $\DNN_{\rm ZVP}(\mathbb{K})$ but not in $\DNN_{\rm BD}(\mathbb{K})$]\label{ex:inZVP_notinBD} Suppose that $n_1 \ge 1$ and $n_2\ge 2$. Let
\begin{equation*}
\bm{A} = \left(
\begin{array}{ccccc}
\Diag(n_2-1,\bm{O}_{n_1-1}) & \vline &\begin{matrix}0 & \bm{1}_{n_2-1}^\top\\\bm{0}_{n_1-1} & \bm{O}_{(n_1-1)\times(n_2-1)} \end{matrix}&\vline & \bm{O}\\
\hline
\begin{matrix}0 & \bm{0}_{n_1-1}^\top\\\bm{1}_{n_2-1} & \bm{O}_{(n_2-1)\times(n_1-1)}\end{matrix} &\vline &\Diag(n_2-1,\bm{I}_{n_2-1}) &\vline &\bm{O}\\
\hline
\bm{O} &\vline& \bm{O} &\vline & \bm{O}
\end{array}\right).
\end{equation*}
We use Proposition~\ref{prop:GDNN_ZVP_SOC} to show that $\bm{A}\in\DNN_{\rm ZVP}(\mathbb{K})$. We can easily check that
\begin{equation*}
\bm{A} \in \bigcap_{h=2}^N\{\bm{X}\in\mathbb{S}^n\mid \langle\bm{J}_h,\bm{X}\rangle\ge 0\} \cap
\{\bm{N}\in\mathbb{S}^n\mid N_{I,J}\ge 0\ (I,J\in\mathcal{I}_{\ge 0},\ I\neq J)\}.
\end{equation*}
Therefore, it is sufficient to show that $\bm{A}\in \mathbb{S}_+^n$. $\bm{A}\in \mathbb{S}_+^n$ if and only if
\begin{equation}
\Diag\left[\bm{O}_{\sum_{h=1}^Nn_h-n_2-1},n_2-1,\begin{pmatrix}
n_2-1 & \bm{1}_{n_2-1}^\top\\
\bm{1}_{n_2-1} & \bm{I}_{n_2-1}
\end{pmatrix}\right] \in \mathbb{S}_+^n, \label{eq:ex_inZVP_notinNN_equiv}
\end{equation}
which is obtained by rearranging some rows and columns of $\bm{A}$.
Moreover, \eqref{eq:ex_inZVP_notinNN_equiv} is equivalent to
\begin{equation*}
\begin{pmatrix}
n_2-1&\bm{1}_{n_2-1}^\top\\
\bm{1}_{n_2-1} & \bm{I}_{n_2-1}
\end{pmatrix} \in \mathbb{S}_+^{n_2},
\end{equation*}
which is true because $(n_2 - 1) - \bm{1}_{n_2-1}^\top \bm{I}_{n_2-1}\bm{1}_{n_2-1} = 0$. (Here, we apply the Schur complement lemma~\cite[Lemma~4.2.1]{BN2001}.)
Therefore, we have $\bm{A}\in \DNN_{\rm ZVP}(\mathbb{K})$.

Next, we let
\begin{equation*}
\bm{s} = \left(\bm{0}_{n_1},\frac{1}{2},-\frac{\bm{1}_{n_2-1}}{2\sqrt{n_2-1}},\bm{0}\right)\in\mathfrak{J}.
\end{equation*}
Then, since $(\bm{A}\bm{s})_1 = -\sqrt{n_2-1}/2 < 0$, we see that $\bm{A}\bm{s} \not\in \mathbb{K}$.
(The first component of an element in $\mathbb{K}$ must be nonnegative.)
Thus, $\bm{A}\not\in \DNN_{\rm BD}(\mathbb{K})$.
\end{example}

\begin{example}
[a matrix that is in $\DNN_{\rm BD}(\mathbb{K})$ but not in $\DNN_{\rm ZVP}(\mathbb{K})$]\label{ex:inBD_notinZVP} Suppose that $n_2\ge 3$. Let
\begin{equation*}
\bm{A} = \Diag\left(\bm{O}_{n_1},1,\frac{\bm{I}_{n_2-1}}{\sqrt{n_2-1}},\bm{O}\right).
\end{equation*}
On the one hand, $\bm{A}\in \DNN_{\rm BD}(\mathbb{K})$.
Indeed, let $\bm{s}\in \mathfrak{J}$.
If $\bm{s} = (\bm{0}_{n_1},1/2,\bm{v}/2,\bm{0})$ for some $\bm{v}\in S^{n_2-2}$, then
\begin{equation*}
\bm{A}\bm{s} = \left(\bm{0}_{n_1}, \frac{1}{2}, \frac{\bm{v}}{2\sqrt{n_2-1}},\bm{0}\right).
\end{equation*}
Since
\begin{equation*}
\left(\frac{1}{2}\right)^2 - \left\|\frac{\bm{v}}{2\sqrt{n_2-1}}\right\|^2 = \frac{1}{4}\left(1-\frac{1}{n_2-1}\right) \ge 0,
\end{equation*}
$(\frac{1}{2}, \frac{\bm{v}}{2\sqrt{n_2-1}}) \in \mathbb{L}^{n_2}$, and therefore we get $\bm{A}\bm{s}\in\mathbb{K}$. Otherwise, $\bm{A}\bm{s} = \bm{0}\in \mathbb{K}$.

On the other hand, $\bm{A}\not\in \DNN_{\rm ZVP}(\mathbb{K})$ because $\langle\bm{J}_2,\bm{A}\rangle = 1 -\sqrt{n_2-1} < 0$.
\end{example}

We consider the special case of $\mathbb{K} = \mathbb{L}^n$.
As a consequence of the dual of the S-lemma~\cite{PT2007} (see also \cite[Theorem~1]{SZ2003}), it follows that
\begin{equation*}
\CP(\mathbb{L}^n) = \mathbb{S}_+^n \cap \{\bm{X}\in\mathbb{S}^n\mid \langle \Diag(1,-\bm{I}_{n-1}),\bm{X}\rangle\ge 0\}.
\end{equation*}
Therefore, $\CP(\mathbb{L}^n)$ itself is semidefinite representable.
Surprisingly, from Proposition~\ref{prop:GDNN_ZVP_SOC} and Theorem~\ref{thm:DNN_inclusion_NN_ZVP}, we see that $\DNN_{\rm ZVP}(\mathbb{L}^n)$ and $\DNN_{\rm NN}(\mathbb{L}^n)$ agree with $\CP(\mathbb{L}^n)$.
However, Example~\ref{ex:inBD_notinZVP} implies that $\DNN_{\rm BD}(\mathbb{L}^n)$ includes $\CP(\mathbb{L}^n)$ strictly if $n\ge 3$.
Including the case of $n \le 2$, we derive the necessary and sufficient condition that $\CP(\mathbb{L}^n)$ agrees with each GDNN cone over $\mathbb{L}^n$.

\begin{proposition}\label{prop:CP(Ln)}
$\CP(\mathbb{L}^n) = \DNN_{\rm NN}(\mathbb{L}^n) = \DNN_{\rm ZVP}(\mathbb{L}^n) \subseteq \DNN_{\rm BD}(\mathbb{L}^n)$ holds.
Furthermore, the above inclusion of ``$\subseteq$'' holds strictly if and only if $n\ge 3$.
\end{proposition}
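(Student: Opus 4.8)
The plan is to read off $\mathbb{L}^n$ as the instance of Section~\ref{subsec:soc} with $N=2$, $n_1=0$, $n_2=n$, so that $\mathcal{I}_{\ge 0}=\{21\}$ is a singleton, $\mathcal{I}_2^-=\{22,\dots,2n\}$, and $\bm{J}_2=\Diag(1,-\bm{I}_{n-1})$. First I would feed this into Corollary~\ref{cor:GDNN_ZVP_SOC}. Because $\mathcal{I}_{\ge 0}$ has a single element, the ``pure nonnegativity'' block $\{\bm{N}\in\mathbb{S}^n\mid N_{IJ}\ge 0\ (I,J\in\mathcal{I}_{\ge 0})\}$ collapses to the single inequality $X_{11}\ge 0$, which is automatic for $\bm{X}\in\mathbb{S}_+^n$. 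Hence
\begin{equation*}
\DNN_{\rm ZVP}(\mathbb{L}^n)=\mathbb{S}_+^n\cap\{\bm{X}\in\mathbb{S}^n\mid\langle\bm{J}_2,\bm{X}\rangle\ge 0\},
\end{equation*}
and by the identity $\CP(\mathbb{L}^n)=\mathbb{S}_+^n\cap\{\bm{X}\in\mathbb{S}^n\mid\langle\Diag(1,-\bm{I}_{n-1}),\bm{X}\rangle\ge 0\}$ of \cite[Theorem~1]{SZ2003} the right-hand side is precisely $\CP(\mathbb{L}^n)$.

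Next I would close the chain of inclusions. The inclusion $\CP(\mathbb{L}^n)\subseteq\DNN_{\rm NN}(\mathbb{L}^n)$ was established in Section~\ref{sec:DNN} (requirement~\ref{req:inclusion} for the NN cone), and $\DNN_{\rm NN}(\mathbb{L}^n)\subseteq\DNN_{\rm ZVP}(\mathbb{L}^n)$ is Corollary~\ref{cor:DNN_inclusion_NN_ZVP}; combined with $\DNN_{\rm ZVP}(\mathbb{L}^n)=\CP(\mathbb{L}^n)$ from the previous step, this squeezes the three cones to equality, giving $\CP(\mathbb{L}^n)=\DNN_{\rm NN}(\mathbb{L}^n)=\DNN_{\rm ZVP}(\mathbb{L}^n)$. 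The remaining inclusion is then immediate, since $\DNN_{\rm ZVP}(\mathbb{L}^n)=\CP(\mathbb{L}^n)\subseteq\DNN_{\rm BD}(\mathbb{L}^n)$ by \cite[Proposition~3]{BD2012}.

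For the ``strictly if and only if $n\ge 3$'' part I would argue both directions. If $n\ge 3$, Example~\ref{ex:inBD_notinZVP}, specialized to $n_1=0$ and $n_2=n$, provides the witness $\bm{A}=\Diag(1,\bm{I}_{n-1}/\sqrt{n-1})$, which lies in $\DNN_{\rm BD}(\mathbb{L}^n)$ but satisfies $\langle\bm{J}_2,\bm{A}\rangle=1-\sqrt{n-1}<0$, hence $\bm{A}\notin\DNN_{\rm ZVP}(\mathbb{L}^n)$ and the inclusion is strict. If $n\le 2$ then $\mathbb{L}^n$ is a solid polyhedral cone: for $n=1$ every cone in sight equals $\mathbb{R}_+$, and for $n=2$ one may write $\mathbb{L}^2=\{\bm{x}\mid\bm{A}_0\bm{x}\in\mathbb{R}_+^2\}$ with $\bm{A}_0$ the invertible matrix with rows $(1,1)$ and $(1,-1)$. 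Then \eqref{eq:GNNcone_polyhedral} gives $\mathcal{N}(\mathbb{L}^2)=\{\bm{X}\in\mathbb{S}^2\mid\bm{A}_0\bm{X}\bm{A}_0^\top\in\mathcal{N}^2\}$, so $\DNN_{\rm BD}(\mathbb{L}^2)$ is the preimage of $\DNN^2$ under the invertible congruence $\bm{X}\mapsto\bm{A}_0\bm{X}\bm{A}_0^\top$, whereas $\CP(\mathbb{L}^2)=\bm{A}_0^{-1}\CP^2\bm{A}_0^{-\top}$ is the preimage of $\CP^2$; since $\CP^2=\DNN^2$, the two coincide. Thus the inclusion is an equality exactly when $n\le 2$, which gives the ``iff''.

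I do not anticipate a genuine obstacle: every step is a direct invocation of a result already proved. The only part that falls outside the framework of Section~\ref{subsec:soc} is the $n\le 2$ case, where one must instead lean on the polyhedral description \eqref{eq:GNNcone_polyhedral} together with the classical fact $\CP^2=\DNN^2$, and that is the single spot requiring a short separate verification.
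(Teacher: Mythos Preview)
Your proposal is correct and follows essentially the same route as the paper: specialize Corollary~\ref{cor:GDNN_ZVP_SOC} to $n_1=0$, $n_2=n$ and match with \cite[Theorem~1]{SZ2003} to get $\DNN_{\rm ZVP}(\mathbb{L}^n)=\CP(\mathbb{L}^n)$, squeeze via Corollary~\ref{cor:DNN_inclusion_NN_ZVP} and $\CP\subseteq\DNN_{\rm NN}$, invoke Example~\ref{ex:inBD_notinZVP} for strictness when $n\ge 3$, and use the polyhedral description~\eqref{eq:GNNcone_polyhedral} for $n\le 2$. Your treatment of the $n\le 2$ case, via the congruence $\bm{X}\mapsto\bm{A}_0\bm{X}\bm{A}_0^\top$ and the classical identity $\CP^2=\DNN^2$, is more explicit than the paper's one-line appeal to~\eqref{eq:GNNcone_polyhedral}, but the underlying idea is the same.
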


\begin{proof}
It is sufficient to show that $\DNN_{\rm BD}(\mathbb{L}^n) \subseteq \CP(\mathbb{L}^n)$ in the case of $n \le 2$.
The equality holds when $n = 1$ because $\CP(\mathbb{L}^1) = \mathbb{S}_+^1$ and $\DNN_{\rm BD}(\mathbb{L}^1)$ is sandwiched between $\CP(\mathbb{L}^1)$ and $\mathbb{S}_+^1$.

Let us consider the case of $n = 2$.
For $\bm{X} \in \DNN_{\rm BD}(\mathbb{L}^2)$, $\bm{X}$ is positive semidefinite by the definition of $\DNN_{\rm BD}(\mathbb{L}^2)$.
In addition, since the vector $(1,1)^\top$ generates an extreme ray of $\mathbb{L}^2$, it follows that $\bm{X}(1,1)^\top = (X_{1,1}+X_{1,2},X_{1,2}+X_{2,2})^\top \in \mathbb{L}^2$, i.e., $X_{1,1}+X_{1,2} \ge |X_{1,2}+X_{2,2}|$.
This implies that $X_{1,1} - X_{2,2}$, which is equal to $\langle \Diag(1,-\bm{I}_{1}),\bm{X}\rangle$, is greater than or equal to $0$.
Thus, we have $\bm{X} \in \CP(\mathbb{L}^2)$.
\end{proof}

Finally, we discuss the inclusion relationship between $\DNN_{\rm NN}(\mathbb{K})$ and $\DNN_{\rm BD}(\mathbb{K})$. Because $\DNN_{\rm NN}(\mathbb{K}) \subseteq \DNN_{\rm ZVP}(\mathbb{K})$ (Theorem~\ref{thm:DNN_inclusion_NN_ZVP}), the matrix presented in Example~\ref{ex:inBD_notinZVP} is also in $\DNN_{\rm BD}(\mathbb{K})$, but not in $\DNN_{\rm NN}(\mathbb{K})$. Therefore, we are interested in whether there exists a matrix that is in $\DNN_{\rm NN}(\mathbb{K})$ but not in $\DNN_{\rm BD}(\mathbb{K})$. If no such matrix exists, $\DNN_{\rm NN}(\mathbb{K})$ is included in $\DNN_{\rm BD}(\mathbb{K})$. Although we were not able to determine this theoretically, the results of our numerical experiment imply that $\DNN_{\rm NN}(\mathbb{K})$ is included in $\DNN_{\rm BD}(\mathbb{K})$.
For details, see Section~\ref{subsec:misocp}.

\subsection{When $\mathbb{K}$ is a direct product of a nonnegative orthant and positive semidefinite cones}
\label{subsec:sdc}
In this subsection, we consider the case where $\mathbb{K}$ is a direct product of a nonnegative orthant and positive semidefinite cones; i.e., $\mathbb{K} = \mathbb{R}_+^{n_1}\times \prod_{h=2}^N\svec(\mathbb{S}_+^{n_h})$, and let $n \coloneqq n_1 + \sum_{h=2}^NT_{n_h}$. For convenience, we reindex $(1,\dots,n)$ as
\begin{equation*}
(11,\dots,1n_1,211,212,222,\dots,21n_2,\dots,2n_2n_2,\dots,N11,\dots,Nn_Nn_N),
\end{equation*}
i.e., $1i \coloneqq i$ for $i = 1,\dots,n_1$, and $hij \coloneqq n_1+\sum_{k=2}^{h-1}T_{n_k} + j(j-1)/2 + i$ for $h = 2,\dots,N$ and $1\le i\le j\le n_h$. For $h = 2,\dots,N$ and $1\le j < i\le n_h$, let $hij \coloneqq hji$.
Moreover, we set $\widetilde{\mathcal{I}}_h \coloneqq \{h11,h12,h22,\dots,hn_hn_h\}$ for $h = 2,\dots,N$ and
\begin{equation*}
\widetilde{\mathcal{I}}_{\ge 0} \coloneqq \{11,\dots,1n_1,211,222,233,\dots,2n_2n_2,\dots,N11,\dots,Nn_Nn_N\}.
\end{equation*}
Let $(\mathbb{E}_1,\circ_1,\bullet_1)$ be the Euclidean Jordan algebra associated with the nonnegative orthant $\mathbb{R}_+^{n_1}$ shown in Example~\ref{ex:nno}, and $(\mathbb{E}_h,\circ_h,\bullet_h)$ be the Euclidean Jordan algebra associated with the positive semidefinite cone $\svec(\mathbb{S}_+^{n_h})$ shown in Example~\ref{ex:sdc} for $h = 2,\dots,N$.
Taking the direct product of the $N$ Euclidean Jordan algebras, we obtain the Euclidean Jordan algebra $(\mathbb{E},\circ,\bullet)$ with $\mathbb{E}_+ = \mathbb{K}$.
For each $\bm{x}\in\mathbb{E}$, the square $\bm{x}\circ\bm{x}$ is calculated as
\begin{equation*}
(\bm{x}\circ\bm{x})_I = \begin{dcases}
x_I^2 & (I = 11,\dots,1n_1),\\
x_I^2 + \frac{1}{2}\sum_{k:k\neq i}x_{hki}^2 & (I = hii,\ h = 2,\dots,N,\ i = 1,\dots,n_h),\\
\begin{split}
&x_{hii}x_I + x_Ix_{hjj} \\
&\ + \frac{1}{\sqrt{2}}\sum_{k:k\neq i,j}x_{hik}x_{hjk}
\end{split} & (I = hij,\ h = 2,\dots,N,\ 1\le i < j\le n_h).
\end{dcases}
\end{equation*}
The identity element $\bm{e}$ of the Euclidean Jordan algebra $(\mathbb{E},\circ,\bullet)$ is the vector with $I$th $(I\in\widetilde{\mathcal{I}}_{\ge 0})$ elements $1$ and all other elements $0$.

As mentioned in Section~\ref{subsec:EJA}, the positive semidefinite cone $\svec(\mathbb{S}_+^m)$ is a symmetric cone.
In addition, $\svec(\mathbb{S}_+^m)$ is semialgebraic, as
\begin{equation}
\svec(\mathbb{S}_+^m) = \left\{\bm{x}\in\mathbb{R}^{T_m} \relmiddle|
\begin{array}{l}
\det\smat(\bm{x})_{\mathcal{I},\mathcal{I}} \ge 0 \text{ for all $\mathcal{I} \subseteq \{1,\dots,m\}$},\\
\bm{e}^\top\bm{x} = \sum_{I\in\widetilde{\mathcal{I}}_{\ge 0}}x_I \ge 0
\end{array}
\right\}. \label{eq:psd_semialgebraic}
\end{equation}
Therefore, we can define all three GDNN cones properly in this case.

Here, we investigate the inclusion relationship between the ZVP- and NN-type GDNN cones.
As in Section~\ref{subsec:soc}, we provide a simpler explicit expression of $\mathcal{K}_{{\rm ZVP},0}(\mathbb{K})$ and $\DNN_{\rm ZVP}(\mathbb{K})$:

\begin{lemma}\label{lem:K_ZVP_SDC}
Let $\bm{J}_h^{ij}$ be the $n\times n$ matrix with $(hii,hjj)$th and $(hjj,hii)$th elements $1$, $(hij,hij)$th element $-1$, and all other elements $0$.
Then,
\begin{equation}
\mathcal{K}_{{\rm ZVP},0}(\mathbb{K}) = \mathbb{S}_+^n + \sum_{h=2}^N\sum_{1\le i< j\le n_h}\mathbb{R}_+\bm{J}_h^{ij} + \left\{\bm{N}\in\mathbb{S}^n\relmiddle|
N_{I,J} \begin{cases}
\ge 0& (I,J\in\widetilde{\mathcal{I}}_{\ge 0},\ I\neq J)\\
= 0& (otherwise)
\end{cases}
\right\}. \label{eq:K_ZVP_SDC}
\end{equation}
\end{lemma}

For simplicity, we write the last set in the right-hand side of \eqref{eq:K_ZVP_SDC} as $\widetilde{\mathcal{N}}_{\ge 0}$.
Because Lemma~\ref{lem:K_ZVP_SDC} can be proven in the same way as Lemma~\ref{lem:K_ZVP_SOC}, the proof is omitted.
We can also show that $\mathcal{K}_{{\rm ZVP},0}(\mathbb{K})$ is closed using Lemma~\ref{lem:minlovski_closed}.
Moreover, taking the dual in Lemma~\ref{lem:K_ZVP_SDC}, we obtain a simpler explicit expression of $\DNN_{\rm ZVP}(\mathbb{K})$.

\begin{proposition}\label{prop:DNN_ZVP_SDC}
For $\mathbb{K} = \mathbb{R}_+^{n_1}\times \prod_{h=2}^N\svec(\mathbb{S}_+^{n_h})$, it follows that
\begin{multline*}
\DNN_{\rm ZVP}(\mathbb{K}) = \mathbb{S}_+^n \cap \bigcap_{h=2}^N\bigcap_{1\le i< j\le n_h}\{\bm{X}\in\mathbb{S}^n \mid \langle \bm{J}_h^{ij},\bm{X}\rangle \ge 0\}\\ \cap \{\bm{N}\in\mathbb{S}^n \mid
N_{I,J} \ge 0\ (I,J\in\widetilde{\mathcal{I}}_{\ge 0},\ I\neq J)\}. \label{eq:DNN_ZVP_SDC}
\end{multline*}
\end{proposition}

\begin{remark}\label{rem:DNN_NN_sd}
Similarly to Corollary~\ref{cor:DNN_NN}, we can obtain an explicit expression of $\DNN_{\rm NN}(\mathbb{K})$ without the closure operator.
In addition, this can be extended to the case of tensors.
\end{remark}

We show that the NN-type GDNN cone is strictly included in the ZVP-type cone as in the case that $\mathbb{K}$ involves second-order cones.

\begin{lemma}\label{lem:K_inclusion_ZVP_NN_SDC}
For $\mathbb{K} = \mathbb{R}_+^{n_1}\times \prod_{h=2}^N\svec(\mathbb{S}_+^{n_h})$, $\mathcal{K}_{{\rm ZVP},0}(\mathbb{K}) \subseteq \mathcal{K}_{{\rm NN},0}(\mathbb{K})$ holds. Moreover, the inclusion holds strictly if $n_1 \ge 1$ and $n_2 \ge 2$.
\end{lemma}
\begin{proof}
Let
\begin{equation}
\bm{A} = \bm{P} + \sum_{h=2}^N\sum_{1\le i< j\le n_h}t_h^{ij}\bm{J}_h^{ij} + \bm{N}, \label{eq:mat_in_K_ZVP_SDC}
\end{equation}
where $\bm{P}\in\mathbb{S}_+^n$, $t_h^{ij}\ge 0$, and $\bm{N}\in\widetilde{\mathcal{N}}_{\ge 0}$.
First, as the proof of Lemma~\ref{lem:DNNd_inclusion_ZVP_NN}, $(\bm{x}\circ\bm{x})^\top \bm{P}(\bm{x}\circ \bm{x})\in\Sigma^{n,4}$. Second, for $h = 2,\dots,N$ and $1\le i < j\le n_h$, because
\begin{align*}
&2(\bm{x}\circ\bm{x})^\top\bm{J}_h^{ij}(\bm{x}\circ\bm{x})\\
&\quad= 4\left(x_{hii}^2 + \frac{1}{2}\sum_{k:k\neq i}x_{hki}^2\right)\left(x_{hjj}^2  + \frac{1}{2}\sum_{k:k\neq j}x_{hkj}^2\right)\\
&\quad\qquad- 2\left(x_{hii}x_{hij} + x_{hij}x_{hjj} + \frac{1}{\sqrt{2}}\sum_{k:k\neq i,j}x_{hik}x_{hjk}\right)^2\\
&\quad= (x_{hij}^2-2x_{hii}x_{hjj})^2 + \sum_{k:k\neq i,j}(x_{hij}x_{hik}-\sqrt{2}x_{hii}x_{hjk})^2\\
&\quad\qquad + \sum_{k:k\neq i,j}(x_{hij}x_{hjk}-\sqrt{2}x_{hjj}x_{hik})^2 + \sum_{\substack{k<l\\k,l\neq i,j}}(x_{hik}x_{hjl}-x_{hil}x_{hjk})^2,
\end{align*}
we have $(\bm{x}\circ\bm{x})^\top\bm{J}_h^{ij}(\bm{x}\circ\bm{x})\in\Sigma^{n,4}$. Finally,
\begin{align}
&(\bm{x}\circ\bm{x})^\top \bm{N}(\bm{x}\circ\bm{x}) \nonumber\\
&\quad= \sum_{i,j=1}^{n_1}N_{1i,1j}x_{1i}^2x_{1j}^2 + 2\sum_{h=2}^N\sum_{i=1}^{n_1}\sum_{j=1}^{n_h}N_{1i,hjj}x_{1i}^2\left(x_{hjj}^2 + \frac{1}{2}\sum_{k:k\neq j}x_{hkj}^2\right)\nonumber\\
&\quad\qquad + \sum_{g,h=2}^N\sum_{i=1}^{n_g}\sum_{j=1}^{n_h} N_{gii,hjj}\left(x_{gii}^2 + \frac{1}{2}\sum_{k:k\neq i}x_{gki}^2\right)\left(x_{hjj}^2 + \frac{1}{2}\sum_{k:k\neq j}x_{hkj}^2\right). \label{eq:N_SDC}
\end{align}
As all variables that appear in \eqref{eq:N_SDC} are squared and each element of $\bm{N}$ is nonnegative, we see that $(\bm{x}\circ\bm{x})^\top \bm{N}(\bm{x}\circ\bm{x}) \in\Sigma^{n,4}$. Therefore, $(\bm{x}\circ\bm{x})^\top \bm{A}(\bm{x}\circ\bm{x})\in \Sigma^{n,4}$, which implies that $\bm{A}\in\mathcal{K}_{{\rm NN},0}(\mathbb{K})$.

The following example shows that the inclusion holds strictly.
Suppose that $n_1\ge 1$ and $n_2\ge 2$. Let $\epsilon$ be a sufficiently small positive value, $\bm{a} = (a_{ij})_{1\le i\le j\le n_2} \in \mathbb{R}^{T_{n_2}}$ be the vector such that
\begin{equation*}
a_{ij} = \begin{cases}
1 & (i = j),\\
\epsilon & (i < j),
\end{cases}
\end{equation*}
and $\bm{A}\in\mathbb{S}^n$ be the matrix with $(11,\widetilde{\mathcal{I}}_2)$th and $(\widetilde{\mathcal{I}}_2,11)$th elements $\bm{a}$ and all other elements $0$.
Then, some calculations yield the following:
\begin{dmath*}
(\bm{x}\circ\bm{x})^\top \bm{A}(\bm{x}\circ\bm{x}) = x_{11}^2\left[\epsilon\sum_{i<j}\{(x_{2ii} + x_{2ij})^2 + (x_{2ij} + x_{2jj})^2\} + \frac{\epsilon}{\sqrt{2}}\sum_{i<j}\sum_{k:k\neq i,j}(x_{2ik} + x_{2jk})^2  + \left\{2-(n_2-1)\epsilon\right\}\sum_{i=1}^{n_2}x_{2ii}^2 + [2-\{2 + \sqrt{2}(n_2-2)\}\epsilon]\sum_{i<j}x_{2ij}^2\right].
\end{dmath*}
Thus, $(\bm{x}\circ\bm{x})^\top \bm{A}(\bm{x}\circ\bm{x})$ can be represented as an SOS polynomial if
\begin{equation*}
\epsilon \le \min\left\{\frac{2}{n_2-1},\frac{2}{2 + \sqrt{2}(n_2-2)}\right\}.
\end{equation*}
Such a positive $\epsilon$ certainly exists.
Consequently, on the one hand, we have $\bm{A}\in \mathcal{K}_{{\rm NN},0}(\mathbb{K})$.

On the other hand, assume that $\bm{A}$ can be expressed as \eqref{eq:mat_in_K_ZVP_SDC}.
Then, we have $0 = A_{11,11} = P_{11,11}$.
Combining $P_{11,11} = 0$ with $\bm{P}\in\mathbb{S}_+^n$ yields $P_{11,2ij} = 0$ for all $i<j$. Therefore, for a pair $(i,j)$ with $i<j$, $A_{11,2ij}$ must be $0$. However, by the definition of $\bm{A}$, $A_{11,2ij} = \epsilon > 0$, which is a contradiction.
\end{proof}

Taking the dual in Lemma~\ref{lem:K_inclusion_ZVP_NN_SDC}, we obtain the following desired result:
\begin{theorem}\label{thm:DNN_inclusion_NN_ZVP_SDC}
For $\mathbb{K} = \mathbb{R}_+^{n_1}\times \prod_{h=2}^N\svec(\mathbb{S}_+^{n_h})$, $\DNN_{\rm NN}(\mathbb{K}) \subseteq \DNN_{\rm ZVP}(\mathbb{K})$ holds.
Moreover, the inclusion holds strictly if $n_1 \ge 1$ and $n_2 \ge 2$.
\end{theorem}

The theoretical inclusion relationship between ZVP- and BD-type GDNN cones and between NN- and BD-type GDNN cones remains unknown.
See also Section~\ref{subsec:max-cut} for the numerical comparison of the strength of relaxation by the three GDNN cones.

\section{Numerical Experiments}\label{sec:experiment}
We conducted a series of numerical experiments to investigate the strength of relaxation by the three GDNN cones.
We consider in Section~\ref{subsec:misocp}, the case where $\mathbb{K}$ is a direct product of a nonnegative orthant and second-order cones, and in Section~\ref{subsec:max-cut}, the case where $\mathbb{K}$ is a direct product of a nonnegative orthant and positive semidefinite cones.
We conducted all of the experiments with MATLAB (R2021b in Section~\ref{subsec:misocp} and R2022a in Section~\ref{subsec:max-cut}) on a computer with an Intel Core i5-8279U 2.40~\mbox{GHz} CPU and 16~\mbox{GB} of memory.
The versions of the YALMIP modeling language~\cite{Lofberg2004} and the MOSEK solver~\cite{MOSEK} we used in the following two experiments are 20210331 and 9.3.3, respectively.

\subsection{Mixed 0--1 second-order cone programming}\label{subsec:misocp}
We considered the following mixed 0--1 second-order cone programming problem with a linear objective function.
\begin{equation}
\begin{alignedat}{3}
&\minimize_{\bm{x}} && \quad\bm{c}^\top\bm{x}\\
&\subjectto && \quad 0\le x_1 \le 2,\\
&&& \quad 0\le x_i \le 1\ (i=2,\dots,n),\\
&&& \quad x_i\in\{0,1\}\ (i\in B\subseteq \{2,\dots,n\}),\\
&&&\quad \bm{x}\in\mathbb{L}^n.
\end{alignedat}\label{prob:MISOCP}
\end{equation}
As a consequence of Burer's findings, the second constraint of problem~\eqref{prob:MISOCP} ensures that we can equivalently transform problem~\eqref{prob:MISOCP} into a GCPP problem~\cite{Burer2012}.
Specifically, by introducing $2n$ nonnegative slack variables, we can convert problem~\eqref{prob:MISOCP} into the primal standard form of GCPP
\begin{equation}
\begin{alignedat}{3}
&\minimize_{\bm{X}} && \quad \langle \bm{C},\bm{X}\rangle \\
&\subjectto && \quad \langle \bm{A}_i, \bm{X}\rangle = b_i\ (i=1,\dots,m),\\
&&&\quad \bm{X}\in \CP(\mathbb{K})
\end{alignedat}\label{prob:GCPP}
\end{equation}
with $m = 4n+{|B|}+1$ and $\mathbb{K} = \mathbb{R}_+^{2n+1}\times \mathbb{L}^n$ for some symmetric matrices $\bm{A}_1,\dots,\bm{A}_m$, $\bm{C}$ and scalars $b_1,\dots,b_m$.
In this section, we solve the problem obtained by relaxing the GCPP problem with each GDNN cone and compare the results.

Reformulating problem~\eqref{prob:MISOCP} as a GCPP problem would be impractical because problem~\eqref{prob:MISOCP} can be solved directly and quickly with an existing mixed-integer second-order cone programming solver.
However, we decided to solve a mixed 0--1 second-order cone programming problem with a linear objective function rather than a nonlinear or nonconvex objective because we were able to obtain the exact optimal value of the problem with the solver and calculate the difference between the optimal value of the original problem and that of its relaxation problem with each GDNN cone.

We created instances of problem~\eqref{prob:MISOCP} as follows.
The number $n$ of variables was changed between $5$, $10$, $30$, and $50$.
The set $B$ of indices to determine binary variables was generated by selecting $0.4n$ elements from $\{2,\dots,n\}$ randomly.
All elements of $\bm{c}$ were independent and identically distributed, and each followed the standard normal distribution. For each $n$, we generated five instances varying the randomness of $B$ and $\bm{c}$.
For each instance, the optimal value of the relaxation problem with $\DNN_{\rm NN}(\mathbb{K})$ (written as NN in Table~\ref{tab:MISOCP_result}), $\DNN_{\rm ZVP}(\mathbb{K})$ (ZVP), and $\DNN_{\rm BD}(\mathbb{K})$ (BD) was measured.
For reference, we also solved the problem~\eqref{prob:MISOCP} itself (MISOCP), as well as the relaxation problem with the positive semidefinite cone (SDP). To improve numerical stability, $0.005\bm{I}$ was added to the coefficient matrix $\bm{C}$ in the standard form~\eqref{prob:GCPP} when we solved the SDP relaxation problems.
We solved a BD-type GDNNP problem as a semi-infinite conic programming problem by adopting an algorithm based on the explicit exchange method~\cite{OHF2012}. See Appendix~\ref{apdx:algorithm} for the algorithm.
We used the YALMIP modeling language and the MOSEK solver to solve the optimization problems.

\begin{table}[tbp]
\centering
\caption{Optimal values of problem~\eqref{prob:MISOCP} and its associated GDNNP and SDP relaxation problems}
\scalebox{0.9}{
\begin{tabular}{rrrrrrr}
\hline
    &     & \multicolumn{5}{c}{Optimal value} \\
$n$ & No. & MISOCP & NN    & ZVP    & BD     & SDP \\
\hline
$5$   & 1   & $-$4.01  & $-$4.01 & $-$4.01  & $-$4.01  & $-$30.56 \\
    & 2   & $-$3.18  & $-$3.18 & $-$3.18  & $-$3.18  & $-$4.93 \\
    & 3   & $-$0.28  & $-$0.28 & $-$0.31  & $-$0.28  & $-$16.86 \\
    & 4   & 0.00   & 0.00  & 0.00   & 0.00   & $-$106.50 \\
    & 5   & $-$1.47  & $-$1.47 & $-$1.47  & $-$1.47  & $-$163.54 \\
$10$  & 1   & $-$4.73  & OOM   & $-$4.74  & $-$4.77  & $-$42.56  \\
    & 2   & $-$5.43  & OOM   & $-$5.43  & $-$5.43  & $-$140.18 \\
    & 3   & $-$1.99  & OOM   & $-$1.99  & $-$1.99  & $-$13.20 \\
    & 4   & 0.00   & OOM   & $-$0.86  & 0.00   & $-$186.66 \\
    & 5   & $-$4.23  & OOM   & $-$4.23  & $-$4.23  & $-$53.90 \\
$30$  & 1   & $-$9.17  & OOM   & $-$9.17  & $-$9.62  & $-$328.08 \\
    & 2   & $-$9.42  & OOM   & $-$9.42  & $-$9.95  & $-$340.17 \\
    & 3   & $-$5.91  & OOM   & $-$5.91  & $-$6.00  & $-$120.07 \\
    & 4   & $-$3.05  & OOM   & $-$3.07  & $-$3.25  & $-$444.75 \\
    & 5   & $-$9.58  & OOM   & $-$9.58  & $-$10.14 & $-$362.77 \\
$50$  & 1   & $-$9.34  & OOM   & $-$9.34  & $-$10.93 & $-$489.81 \\
    & 2   & $-$9.70  & OOM   & $-$9.71  & $-$10.25 & $-$324.13 \\
    & 3   & $-$6.68  & OOM   & $-$6.69  & $-$7.16  & $-$293.58 \\
    & 4   & $-$3.30  & OOM   & $-$3.30  & $-$3.78  & $-$560.80 \\
    & 5   & $-$12.07 & OOM   & $-$12.10 & $-$13.29 & $-$750.69 \\
\hline
\end{tabular}
}\label{tab:MISOCP_result}
\end{table}

Table~\ref{tab:MISOCP_result} lists the optimal values of the problem~\eqref{prob:MISOCP} and its GDNNP and SDP relaxation problems, where ``OOM'' means that we could not solve the problem because of insufficient memory.

It may be observed from Table~\ref{tab:MISOCP_result} that the optimal values of the GDNNP relaxation problems were much better than those of the SDP relaxation problems and were close to those of the original problems.
This implies that GDNNP relaxation for GCPP provides much tighter bounds than SDP relaxation.
In particular, the NN- and ZVP-type GDNNP relaxations provided nearly optimal values for the original problems.
These results are consistent with those of our previous study mentioned in Section~\ref{sec:intro}, in which we approximated the GCOP cone with the dual cone of the NN- and ZVP-type GDNN cones, i.e., $\mathcal{K}_{{\rm NN},0}(\mathbb{K})$ and $\mathcal{K}_{{\rm ZVP},0}(\mathbb{K})$.

ZVP-type GDNNP relaxation exhibited better numerical properties than BD-type GDNNP relaxation.
As illustrated in Examples~\ref{ex:inZVP_notinBD} and \ref{ex:inBD_notinZVP}, the inclusion relationship between ZVP- and BD-type GDNN cones does not hold.
However, the optimal values of the ZVP-type GDNNP relaxation problems are better than those of the BD-type problems in most cases, especially when $n\ge 30$.

Given the theoretical inclusion relationship between NN- and ZVP-type GDNN cones shown in Theorem~\ref{thm:DNN_inclusion_NN_ZVP} and the superior numerical performance of the ZVP-type GDNN cone compared to the BD-type cone, it is reasonable to conjecture that the NN-type cone provides numerically tighter relaxation than the BD-type.
It may be observed from Table~\ref{tab:MISOCP_result} that the optimal values of the NN-type GDNNP relaxation problems were always not worse than those of the BD-type problems. Of note, the theoretical inclusion relationship between NN- and BD-type GDNN cones remains unknown.
We therefore presume that the NN-type GDNN cone is included in the BD-type GDNN cone.

\begin{conjecture}\label{conj:DNN_inclusion_NN_BD}
If $\mathbb{K}$ is a direct product of a nonnegative orthant and second-order cones, $\DNN_{\rm NN}(\mathbb{K}) \subseteq \DNN_{\rm BD}(\mathbb{K})$ holds.
\end{conjecture}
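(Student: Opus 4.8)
The plan is to reduce the statement to a short sum-of-squares identity, thereby bypassing the separation subproblems used in Sect.~\ref{subsec:soc}. Since $\DNN_{\rm NN}(\mathbb{K}) = \mathcal{C}_0(\mathbb{K})$ (Proposition~\ref{prop:DNN_NN}) and $\DNN_{\rm NN}(\mathbb{K}) \subseteq \mathbb{S}_+^n$ (Sect.~\ref{sec:DNN}), while $\DNN_{\rm BD}(\mathbb{K}) = \mathbb{S}_+^n \cap \mathcal{N}(\mathbb{K})$, it suffices to prove the purely algebraic inclusion $\mathcal{C}_0(\mathbb{K}) \subseteq \mathcal{N}(\mathbb{K})$. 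By Lemma~\ref{lem:GNNcone_symmetric} this means: for every $\bm{y} \in \mathcal{M}_{n,4}$ and every $\bm{s} \in \mathfrak{J}(\mathbb{K})$, one has $\bm{C}_0(\bm{y})\bm{s} \in \mathbb{K}$. I would \emph{not} attack this through the four cases \eqref{eq:neg_neg}--\eqref{eq:so_so}; the case \eqref{eq:so_so} leads to the trust-region subproblem \eqref{prob:TRS}, which looks hard to control by hand for general block sizes. Instead, using that $\mathbb{K}$ is self-dual, I would observe that $\bm{C}_0(\bm{y})\bm{s} \in \mathbb{K} = (\mathbb{K}^*)^*$ is equivalent to $\langle \bm{C}_0(\bm{y})\bm{s}, \bm{t}\rangle \ge 0$ for \emph{all} $\bm{t} \in \mathbb{K}$.

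The next step is to rewrite this pairing as a moment pairing. Setting $\bm{A}_{\bm{s},\bm{t}} \coloneqq (\bm{s}\bm{t}^\top + \bm{t}\bm{s}^\top)/2 \in \mathbb{S}^n$, one has $\langle \bm{C}_0(\bm{y})\bm{s}, \bm{t}\rangle = \bm{t}^\top \bm{C}_0(\bm{y})\bm{s} = \langle \bm{C}_0(\bm{y}), \bm{A}_{\bm{s},\bm{t}}\rangle$, which by Eq.~\eqref{eq:C_dual} equals $\bm{y}^\top \bm{p}_{\bm{A}_{\bm{s},\bm{t}}}$, where $\bm{p}_{\bm{A}_{\bm{s},\bm{t}}}$ is the coefficient vector of
\begin{equation*}
P(\bm{x};\bm{A}_{\bm{s},\bm{t}}) = (\bm{x}\circ\bm{x})^\top \bm{A}_{\bm{s},\bm{t}} (\bm{x}\circ\bm{x}) = \bigl((\bm{x}\circ\bm{x})^\top\bm{s}\bigr)\bigl((\bm{x}\circ\bm{x})^\top\bm{t}\bigr).
\end{equation*}
Here is the crux: each factor $(\bm{x}\circ\bm{x})^\top\bm{s}$ is a quadratic form in $\bm{x}$ (the entries of $\bm{x}\circ\bm{x}$ are quadratic), and it is nonnegative on all of $\mathbb{R}^n$ because $\bm{x}\circ\bm{x} \in \mathbb{E}_+ = \mathbb{K}$ for every $\bm{x}$ and $\bm{s} \in \mathbb{K} = \mathbb{K}^*$; hence it is a sum of squares of linear forms, and likewise for $(\bm{x}\circ\bm{x})^\top\bm{t}$. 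Writing $(\bm{x}\circ\bm{x})^\top\bm{s} = \sum_i \ell_i^2$ and $(\bm{x}\circ\bm{x})^\top\bm{t} = \sum_j m_j^2$ with $\ell_i, m_j$ linear, the product is $\sum_{i,j}(\ell_i m_j)^2$ with each $\ell_i m_j \in H_{n,2}$, so $P(\bm{x};\bm{A}_{\bm{s},\bm{t}}) \in \Sigma_{n,4}$. Since $\bm{y} \in \mathcal{M}_{n,4} = \Sigma_{n,4}^*$, we get $\bm{y}^\top \bm{p}_{\bm{A}_{\bm{s},\bm{t}}} \ge 0$, i.e. $\langle \bm{C}_0(\bm{y})\bm{s}, \bm{t}\rangle \ge 0$; as $\bm{t} \in \mathbb{K}$ was arbitrary, $\bm{C}_0(\bm{y})\bm{s} \in \mathbb{K}$, which finishes the argument.

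The step I expect to be the real hurdle (and plausibly the reason a more hands-on attempt stalls) is the identification of $P(\bm{x};\bm{A}_{\bm{s},\bm{t}})$ as a \emph{genuine} sum of squares: a nonnegative quartic in four or more variables need not be SOS, so the fact that $\mathcal{M}_{n,4}$ does not consist only of truncated moment vectors is exactly what must be gotten past — and it is gotten past here precisely because $P(\bm{x};\bm{A}_{\bm{s},\bm{t}})$ factors as a product of two nonnegative quadratic forms, each of which \emph{is} SOS, and a product of two SOS forms is SOS. Everything else is bookkeeping with identities already in the paper. I would also note that the argument never uses the second-order-cone structure: the same reasoning shows $\DNN_{\rm NN}(\mathbb{K}) \subseteq \DNN_{\rm BD}(\mathbb{K})$ for \emph{every} symmetric cone $\mathbb{K}$, since $\DNN_{\rm NN}(\mathbb{K})$ is the closure of $\{\bm{C}(\bm{y}) : \bm{y} \in \mathcal{M}_{n,4}\}$ (with $\bm{C}(\bm{y})$ the symmetric matrix determined by the relation in Eq.~\eqref{eq:C_dual}) and both $\mathcal{N}(\mathbb{K})$ and $\mathbb{S}_+^n$ are closed; thus Conjecture~\ref{conj:DNN_inclusion_NN_BD} would in fact hold in full generality.
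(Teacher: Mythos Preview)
Your argument is correct, and it goes strictly beyond the paper: the paper does \emph{not} prove Conjecture~\ref{conj:DNN_inclusion_NN_BD} at all. The authors explicitly write (just before Sect.~\ref{subsec:sdc}) ``Although we were not able to determine this theoretically, the results of our numerical experiment imply\dots,'' and the only supporting evidence offered is the experiment of Sect.~\ref{subsec:NN_BD_GDNNcone}, where $1000$ random $\bm{y}\in\mathcal{M}_{4,4}$ are sampled and the associated TRS optimal values are checked to be nonnegative.

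Every step in your chain is sound. The identity $\langle\bm{C}_0(\bm{y})\bm{s},\bm{t}\rangle=\langle\bm{C}_0(\bm{y}),\bm{A}_{\bm{s},\bm{t}}\rangle=\bm{y}^\top\bm{p}_{\bm{A}_{\bm{s},\bm{t}}}$ uses only the symmetry of $\bm{C}_0(\bm{y})$ and Eq.~\eqref{eq:C_dual}; the factorization $P(\bm{x};\bm{A}_{\bm{s},\bm{t}})=\bigl((\bm{x}\circ\bm{x})^\top\bm{s}\bigr)\bigl((\bm{x}\circ\bm{x})^\top\bm{t}\bigr)$ is immediate from $\bm{A}_{\bm{s},\bm{t}}=(\bm{s}\bm{t}^\top+\bm{t}\bm{s}^\top)/2$; and the decisive observation---that each factor is a nonnegative quadratic form in $\bm{x}$ (since $\bm{x}\circ\bm{x}\in\mathbb{K}$ and $\bm{s},\bm{t}\in\mathbb{K}=\mathbb{K}^*$), hence a sum of squares of linear forms, so their product lies in $\Sigma_{n,4}$---is exactly what sidesteps the TRS-based case analysis that the paper's separation viewpoint would have required. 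Your closing remark on generality is also correct: nothing uses the second-order-cone block structure. Defining $\bm{C}(\bm{y})$ for a general symmetric cone as the adjoint of the linear map $\bm{A}\mapsto\bm{p}_{\bm{A}}$ (so that the analogue of Eq.~\eqref{eq:C_dual} holds by construction), the same argument gives $\{\bm{C}(\bm{y}):\bm{y}\in\mathcal{M}_{n,4}\}\subseteq\mathcal{N}(\mathbb{K})$; since $\mathcal{N}(\mathbb{K})$ is closed and $\DNN_{\rm NN}(\mathbb{K})\subseteq\mathbb{S}_+^n$ is already established in Sect.~\ref{sec:DNN}, one obtains $\DNN_{\rm NN}(\mathbb{K})\subseteq\DNN_{\rm BD}(\mathbb{K})$ for every symmetric cone. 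You have settled, and in fact strengthened, the conjecture.
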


Note that this conjecture is true where $\mathbb{K}$ is a second-order cone by Proposition~\ref{prop:CP(Ln)}.
Thus far, we have demonstrated the inclusion relationship between the three GDNN cones in Theorem~\ref{thm:DNN_inclusion_NN_ZVP}, Examples~\ref{ex:inZVP_notinBD}, \ref{ex:inBD_notinZVP}, and Conjecture~\ref{conj:DNN_inclusion_NN_BD}.
These results are illustrated in Figure~\ref{fig:inclusion}.

\begin{figure}[tbp]
\centering
\includegraphics[width=.3\linewidth]{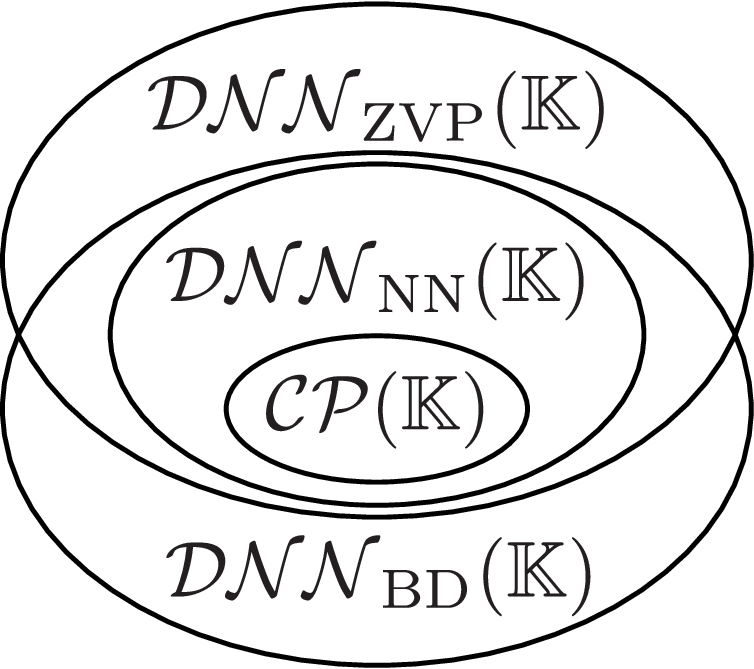}
\caption{Inclusion relationship between ZVP-, NN-, and BD-type generalized doubly nonnegative cones (including Conjecture~\ref{conj:DNN_inclusion_NN_BD})}
 \label{fig:inclusion}
\end{figure}

We have observed that the NN-type GDNN cone provided the tightest relaxation of the three GDNN cones from both theoretical and numerical perspectives.
However, we could not compute the NN-type GDNNP relaxation problems in the case of $n \ge 10$ due to insufficient memory.
\footnote{We tried to solve the NN-type GDNNP relaxation problems on another computer with 64 GB of memory, but the same issue persisted.}
We suspect that one of the reasons is that the NN-type GDNN cone is described by the positive semidefiniteness of the moment matrix.
The NN-type GDNNP relaxation problem of problem~\eqref{prob:GCPP} includes a positive semidefinite constraint of size $O(n^2)$, whereas the ZVP-type GDNNP relaxation problem only includes a positive semidefinite constraint of size $O(n)$.

\subsection{Max-cut problem}\label{subsec:max-cut}
Let $G = (V,E)$ be an undirected graph with nonnegative weights $(w_{ij})_{ij\in E}$, where $V \coloneqq \{1,\dots,n\}$ is a set of vertices, and $E \subseteq \{\{i,j\} \mid 1\le i < j\le n\}$ is a set of edges. We write $ij$ for $\{i,j\} \in E$.
The max-cut problem, a well-known NP-hard problem~\cite{Karp1972}, aims to find a subset $S \subseteq V$ that maximizes the sum of weights on edges connecting $S$ and $V\setminus S$.
For notational convenience, for $1\le i\le j \le n$ with $ij\not \in E$, we define $w_{ij} \coloneqq 0$.
Then, the max-cut problem can be formulated as the following binary quadratic programming problem:
\begin{equation}
\begin{alignedat}{3}
&\maximize_{\bm{x}} && \quad \frac{1}{4}\sum_{i,j=1}^nw_{ij}(1-x_ix_j)\\
&\subjectto && \quad x_i^2 = 1\ (i=1,\dots,n).
\end{alignedat}\label{prob:max-cut}
\end{equation}
Introducing a matrix variable $\widehat{\bm{X}} \coloneqq \bm{x}\bm{x}^\top$, up to a constant term and the sign of the optimal value, \eqref{prob:max-cut} is equivalent to an SDP problem with the rank constraint $\rank(\widehat{\bm{X}}) \le 1$.
Furthermore, by \cite[Theorem~5]{BMP2016}, we can equivalently transform the rank-constrained SDP problem into a GCPP problem in the form of \eqref{prob:GCPP} with $m = n^2 + 3n + 4$ and $\mathbb{K} = \mathbb{R}_+ \times \svec(\mathbb{S}_+^n)^3$.
As in Section~\ref{subsec:misocp}, to solve the problem obtained, we relax the GCPP problem with each GDNN cone and compare the results.

Instances of problem~\eqref{prob:max-cut} were created as follows.
The number $n$ of vertices was changed between $3$ and $5$. \footnote{Comparing the three GDNN cones for larger $n$ generated numerical issues. For $n = 10$, the computational time for solving BD-type GDNNP relaxation problems exceeded 14400~s, and due to insufficient memory, we were unable to solve NN-type GDNNP relaxation problems.}
For each $1 \le i < j\le n$, we independently generated an edge $ij$ with probability $1/2$.
All elements of $(w_{ij})_{ij\in E}$ were independent and identically distributed, and each followed the uniform distribution on the interval $(0,1)$.
For each $n$, we generated five instances by varying the randomness of $E$ and $(w_{ij})_{ij\in E}$.
For each instance, the optimal value of the relaxation problem with $\DNN_{\rm NN}(\mathbb{K})$ (written as NN in Table~\ref{tab:max-cut_result}), $\DNN_{\rm ZVP}(\mathbb{K})$ (ZVP), and $\DNN_{\rm BD}(\mathbb{K})$ (BD) was measured.
For reference, we also solved problem~\eqref{prob:max-cut} (MC) and the associated relaxation problems with the positive semidefinite cone $\mathbb{S}_+^{3T_n + 1}$ (SDP).
To improve numerical stability, $0.005\bm{I}$ was added to the coefficient matrix $\bm{C}$ in the standard form~\eqref{prob:GCPP} when solving SDP relaxation problems.
As in Section~\ref{subsec:misocp}, the BD-type GDNNP problem was solved by the algorithm shown in Appendix~\ref{apdx:algorithm}.
To solve NN-, ZVP-, and BD-type GDNNP and SDP relaxation problems, we used the YALMIP modeling language and the MOSEK solver. To solve \eqref{prob:max-cut}, we used the Gurobi solver~\cite{Gurobi} (version 10.0.3).

\begin{table}[]
\centering
\caption{Optimal values of problem~\eqref{prob:max-cut} and its associated GDNNP and SDP relaxation problems}
\scalebox{0.9}{
\begin{tabular}{rrrrrrr}
\hline
 & & \multicolumn{5}{c}{Optimal value}           \\
$n$ & No. & MC   & NN   & ZVP  & BD   & SDP \\
\hline
$3$   & 1   & 0.81 & 0.81 & 0.81 & 0.81 & 13.58    \\
    & 2   & 0.33 & 0.33 & 0.33 & 0.33 & 2.89     \\
    & 3   & 1.00 & 1.00 & 1.00 & 1.00 & 15.14    \\
    & 4   & 1.19 & 1.19 & 1.19 & 1.19 & 25.41    \\
    & 5   & 0.87 & 0.87 & 0.87 & 0.87 & 19.38    \\
$5$   & 1   & 2.10 & OOM  & 2.10 & 2.10 & 36.55    \\
    & 2   & 1.81 & OOM  & 2.24 & 1.81 & 26.67    \\
    & 3   & 3.04 & OOM  & 3.32 & 3.04 & 45.36    \\
    & 4   & 4.05 & OOM  & 4.30 & 4.05 & 91.34    \\
    & 5   & 1.94 & OOM  & 2.13 & 1.94 & 35.26 \\
\hline
\end{tabular}
}\label{tab:max-cut_result}
\end{table}

Table~\ref{tab:max-cut_result} lists the optimal values of problem~\eqref{prob:max-cut} and the accompanying GDNNP and SDP relaxation problems, where ``OOM'' means that we could not solve the problem because of insufficient memory.

GDNNP relaxation for GCPP provided significantly tighter bounds than SDP relaxation, a finding that aligns with the results presented in Section~\ref{subsec:misocp}.
However, unlike Section~\ref{subsec:misocp}, the ZVP-type GDNN cone provided worse relaxation than the BD-type cone.
This might be because the semialgebraic representation~\eqref{eq:psd_semialgebraic} of $\svec(\mathbb{S}_+^n)$ involves polynomials of degree exceeding $2$ when $n \ge 3$; whereas the semialgebraic representation~\eqref{eq:soc_semialgebraic} of the direct product of a nonnegative orthant and second-order cones only involves polynomials of degree at most $2$.
As mentioned in Section~\ref{subsec:ZVP_GDNN}, polynomials of degree exceeding $2$ appearing in a semialgebraic representation of $\mathbb{K}$ do not contribute to the construction of $\DNN_{\rm ZVP}(\mathbb{K})$.
Therefore, we can assume that the more the polynomials of degree exceeding $2$, the looser is the ZVP-type GDNNP relaxation.

\section{Conclusion}
\label{sec:conclusion}
In this study, we theoretically and numerically compared the strength of the relaxation of ZVP-, NN-, and BD-type GDNN cones over cones $\mathbb{K}$.
In particular, we considered two types of cones $\mathbb{K}$: a direct product of a nonnegative orthant and second-order cones and a direct product of a nonnegative orthant and positive semidefinite cones.
When $\mathbb{K}$ is a direct product of a nonnegative orthant and second-order cones, no theoretical inclusion relationship is obtained between ZVP- and BD-type GDNN cones, and the ZVP-type GDNN cone provides better relaxation numerically than BD-type cone. By contrast, when $\mathbb{K}$ is a direct product of a nonnegative orthant and positive semidefinite cones, the ZVP-type GDNN cone numerically provides worse relaxation than BD-type cone. In both cases, the NN-type GDNN cone is theoretically strictly included in the ZVP-type cone.
The results of our numerical experiments show that the three GDNN cones yield much tighter bounds for GCPP than the positive semidefinite cone, which suggests a promising avenue for further study.

As noted in Section~\ref{sec:intro}, we aimed to explore whether the ZVP-type $\{\mathcal{K}_{{\rm ZVP},r}(\mathbb{K})\}_r$ and NN-type hierarchies $\{\mathcal{K}_{{\rm NN},r}(\mathbb{K})\}_r$ are the same.
Lemmas~\ref{lem:DNNd_inclusion_ZVP_NN} and \ref{lem:K_inclusion_ZVP_NN_SDC} address this problem specifically for the case where $\mathbb{K}$ is a direct product of a nonnegative orthant and second-order or positive semidefinite cones.
These lemmas establish the strict inclusion relationship between the zeroth level of the two hierarchies.
However, whether the inclusion relationship between them also holds for a general $r$ remains an open question.

The theoretical inclusion relationship between the BD-type and other GDNN cones is of interest when the underlying cone $\mathbb{K}$ involves positive semidefinite cones.
In addition, the case of $\mathbb{K}$ being neither the direct product of a nonnegative orthant and second-order cones nor the direct product of a nonnegative orthant and positive semidefinite cones is also intriguing.
Seemingly, these are more challenging tasks than the cases we treat in this paper.
First, for a general cone $\mathbb{K}$, the set $\mathcal{K}_{{\rm ZVP},0}(\mathbb{K})$ is not necessarily closed, which would preclude us from investigating the strict inclusion relationship between ZVP- and NN-type GDNN cones in the same way as in this paper.
Second, a characterization of the NN-type GDNN cone $\DNN_{\rm NN}(\mathbb{K})$ exists theoretically for a general symmetric cone, as shown in \eqref{def:C0}, which needs a closure operator in general~\cite[Proposition~\mbox{3.6}]{NN20XX}.
Third, when $\mathbb{K}$ involves a matrix cone such as a positive semidefinite cone, the vectorization of a matrix and matrization of a vector are required to deal with the BD-type GDNN cone $\DNN_{{\rm BD}}(\mathbb{K})$.
Consider the case where $\mathbb{K}$ is the positive semidefinite cone $\svec(\mathbb{S}_+^n)$.
In this scenario, a symmetric matrix $\bm{X} \in \mathbb{S}^{T_n}$ belongs to $\DNN_{{\rm BD}}(\svec(\mathbb{S}_+^n))$ only if $\smat(\bm{X}\svec(\bm{v}\bm{v}^\top)) \in \mathbb{S}_+^n$ for all $\bm{v} \in S^{n-1}$.
(Note that any extreme ray of $\mathbb{S}_+^n$ is generated by the matrix $\bm{v}\bm{v}^\top$ for some $\bm{v} \in S^{n-1}$.)
This condition appears to be much more difficult to verify theoretically than the discussion conducted in Section~\ref{subsec:soc}.

\vspace{0.5cm}
\noindent
{\bf Acknowledgments}
This work was supported by the Tokyo Tech Advanced Human Resource Development Fellowship for Doctoral Students and Japan Society for the Promotion of Science Grants-in-Aid for Scientific Research (Grant Numbers JP20H02385 and JP22J20008).

\begin{appendices}
\section{Example of $\mathbb{K}$ Such That $\mathcal{K}_{{\rm ZVP},0}(\mathbb{K})$ Is Not Closed}\label{apdx:example_ZVP_not_closed}
Let $\bm{a} \coloneqq (1,0)^\top$ and $\bm{Q} \coloneqq
\begin{pmatrix}
0&0\\
0&-1
\end{pmatrix}$.
Then,
\begin{equation*}
\mathbb{K} \coloneqq \{\bm{x}\in\mathbb{R}^2 \mid \bm{x}^\top \bm{Q}\bm{x}  = -x_2^2 \ge 0,\ \bm{a}^\top\bm{x} =x_1\ge 0\}
\end{equation*}
is a pointed semialgebraic convex cone (the nonnegative $x_1$-axis) and the vector $\bm{a}$ satisfies $\bm{a} \in \setint(\mathbb{K}^*)$.
It follows from Lemma~\ref{lem:K_ZVP} that $\mathcal{K}_{{\rm ZVP},0}(\mathbb{K}) = \mathbb{S}_+^2 + \mathbb{R}_+\bm{Q}$.
For each $k$, we define $\bm{P}_k \coloneqq \begin{pmatrix}
1/k&1\\
1&k
\end{pmatrix} \in \mathbb{S}_+^2$ and $\bm{X}_k \coloneqq \bm{P}_k + k\bm{Q} \in \mathcal{K}_{\rm ZVP}(\mathbb{K})$.
Then, we have $\bm{X}_k = \begin{pmatrix}
1/k&1\\
1&0
\end{pmatrix} \rightarrow \bm{X}^* \coloneqq \begin{pmatrix}
0&1\\
1&0
\end{pmatrix}$.

To see that $\bm{X}^* \not\in \mathcal{K}_{{\rm ZVP},0}(\mathbb{K})$, we assume that there exist $\bm{P}^* \in \mathbb{S}_+^2$ and $t^* \ge 0 $ such that $\bm{X}^* = \bm{P}^* + t^*\bm{Q}$.
This equation implies that $P_{1,1}^* = 0$ and $P_{1,2}^* = 1$.
However, by $P_{1,1}^* = 0$ and the positive semidefiniteness of $\bm{P}^*$, $P_{1,2}^*$ must be $0$, which is a contradiction.
Therefore, $\mathcal{K}_{{\rm ZVP},0}(\mathbb{K})$ is not closed.

\section{Explicit Exchange Method for BD-Type Generalized Doubly Nonnegative Programming}
\label{apdx:algorithm}
For a symmetric cone $\mathbb{K}$, the algorithm used to solve a BD-type GDNNP problem obtained by replacing $\CP(\mathbb{K})$ in problem~\eqref{prob:GCPP} with $\DNN_{\rm BD}(\mathbb{K})$ is presented in Algorithm~\ref{alg:explicit}.
Although we preliminarily set $\gamma_k$ to $0.5^k$, in the branch of Step 1-2, if case~\eqref{enum:isnot_exist} with $r=0$ occurred five times in a row, $\gamma_{k+1}$ was set to $\tau$ at the end of Step 2 and returned to Step 1 to accelerate convergence. The threshold value $\tau$ in Algorithm~\ref{alg:explicit} was set to $10^{-5}$ and subset $\mathfrak{J}^{(0,0)}$ was set to $\emptyset$.
We tried to identify $\bm{s}_{\rm new}^{(k,r)}$ using the following method.
Before starting the algorithm, we fixed a finite subset $\mathfrak{J}_{\rm fix}$ of a compact set $\mathfrak{J}$ satisfying
\footnote{For example, the set of primitive idempotents in a Euclidean Jordan algebra associated with $\mathbb{K}$ is a compact set satisfying \eqref{eq:Ext_subset} (see \cite[Exercise~\mbox{\RNum{4}.5}]{FK1994} and \cite[Corollary~\mbox{12}]{IL2017}).}
\begin{equation}
\Ext(\mathbb{K}) = \{\alpha\bm{s} \mid \alpha > 0,\ \bm{s}\in \mathfrak{J}\}. \label{eq:Ext_subset}
\end{equation}
In Section~\ref{subsec:misocp}, because
\begin{equation*}
\mathfrak{J} = \{(\bm{e}_i,\bm{0}_n)\mid i = 1,\dots,2n+1\} \cup \{(\bm{0}_{2n+1},1/2,\bm{v}/2)\mid \bm{v}\in S^{n-2}\}
\end{equation*}
satisfies \eqref{eq:Ext_subset}, we took
\begin{equation*}
\mathfrak{J}_{\rm fix} = \{(\bm{e}_i,\bm{0}_n)\mid i = 1,\dots,2n+1\} \cup \{(\bm{0}_{2n+1},1/2,\bm{v}_i/2)\mid i = 1,\dots,1000\},
\end{equation*}
where each $\bm{v}_i$ was generated from $S^{n-2}$ randomly.
In Section~\ref{subsec:max-cut}, because
\begin{multline*}
\mathfrak{J} = \{(1,\bm{0}_{3T_n})\} \cup \{(0,\svec(\bm{v}\bm{v}^\top),\bm{0}_{2T_n}) \mid \bm{v}\in S^{n-1}\}\\ \cup \{(0,\bm{0}_{T_n},\svec(\bm{v}\bm{v}^\top),\bm{0}_{T_n}) \mid \bm{v}\in S^{n-1}\} \cup \{(0,\bm{0}_{2T_n}\svec(\bm{v}\bm{v}^\top)) \mid \bm{v}\in S^{n-1}\}
\end{multline*}
satisfies \eqref{eq:Ext_subset}, we took
\begin{multline*}
\mathfrak{J}_{\rm fix} = \{(1,\bm{0}_{3T_n})\} \cup \{(0,\svec(\bm{v}_i\bm{v}_i^\top),\bm{0}_{2T_n}) \mid i=1,\dots,1000\}\\ \cup \{(0,\bm{0}_{T_n},\svec(\bm{v}_i\bm{v}_i^\top),\bm{0}_{T_n}) \mid i=1001,\dots,2000\}\\
\cup \{(0,\bm{0}_{2T_n}\svec(\bm{v}_i\bm{v}_i^\top)) \mid i=2001,\dots,3000\},
\end{multline*}
where each $\bm{v}_i$ was generated from $S^{n-1}$ randomly.
In Step 1-2, we first checked whether there exists $\bm{s}_{\rm new} \in \mathfrak{J}_{\rm fix}$ such that $\lambda_{\rm min}(\bm{X}^{(k,r)}\bm{s}_{\rm new} + \gamma_k\bm{e}) < 0$ where $\lambda_{\rm min}(\cdot)$ is the minimum eigenvalue in the sense of Euclidean Jordan algebras~\cite[Theorem~\mbox{\RNum{3}.1.1}]{FK1994}.
If such $\bm{s}_{\rm new}$ exists, we let $\bm{s}_{\rm new}^{(k,r)} = \bm{s}_{\rm new}$.
Otherwise, we next checked whether the optimal value of the optimization problem $\min_{\bm{s}\in \mathfrak{J}}\lambda_{\rm min}(\bm{X}^{(k,r)}\bm{s} + \gamma_k\bm{e})$ is less than 0.
In addition, $\bm{v}_{\bm{s}}^{(k,r+1)}$ was regarded as $\bm{0}$ if $\|\bm{v}_{\bm{s}}^{(k,r+1)}\| \le 10^{-12}$.

\begin{algorithm}[t]
\caption{Explicit exchange method for BD-type GDNNP}
\label{alg:explicit}
\begin{description}
\item[\bf Step 0.]Choose a positive sequence $\{\gamma_k\}_k$ such that $\gamma_k \downarrow 0\ (k\to\infty)$, a positive value $\tau$, a compact subset $\mathfrak{J}$ of $\Ext(\mathbb{K})$ such that $\Ext(\mathbb{K}) = \{\alpha\bm{s} \mid \alpha > 0,\ \bm{s}\in \mathfrak{J}\}$, and a finite subset $\mathfrak{J}^{(0,0)}$ of $\mathfrak{J}$.
Solve ${\rm P}(\mathfrak{J}^{(0,0)})$ to obtain an optimal solution $\bm{X}^{(0,0)}$, where ${\rm P}(\mathfrak{J}')$ is defined as
\begin{equation*}
{\rm P}(\mathfrak{J}'):\quad
\begin{alignedat}{3}
&\minimize_{\bm{X}} && \quad \langle \bm{C},\bm{X}\rangle \\
&\subjectto && \quad \langle \bm{A}_i, \bm{X}\rangle = b_i\ (i=1,\dots,m),\\
&&&\quad \bm{X}\in \mathbb{S}_+^n,\\
&&&\quad \bm{X}\bm{s} \in \mathbb{K}\ (\forall\bm{s}\in \mathfrak{J}')
\end{alignedat}
\end{equation*}
for a finite subset $\mathfrak{J}'$ of $\mathfrak{J}$.
Set $k \coloneqq 0$.
\item[\bf Step 1.]Obtain $\bm{X}^{(k+1,0)}$ and $\mathfrak{J}^{(k+1,0)}$ by the following procedure.
\begin{description}
\item [\bf Step 1-1.]Set $r\coloneqq 0$.
\item [\bf Step 1-2.]Try to find $\bm{s}_{\rm new}^{(k,r)} \in \mathfrak{J}$ such that $\lambda_{\rm min}(\bm{X}^{(k,r)}\bm{s}_{\rm new}^{(k,r)} + \gamma_k\bm{e}) < 0$.
\begin{enumerate}[(a)]
\item If there exists such $\bm{s}_{\rm new}^{(k,r)}$, let $\overbar{\mathfrak{J}}^{(k,r+1)} \coloneqq \mathfrak{J}^{(k,r)}\cup\{\bm{s}_{\rm new}^{(k,r)}\}$ and solve ${\rm P}(\overbar{\mathfrak{J}}^{(k,r+1)})$ to find an optimal solution $\bm{X}^{(k,r+1)}$ and optimal dual variables $\bm{v}_{\bm{s}}^{(k,r+1)}\ (\bm{s}\in\overbar{\mathfrak{J}}^{(k,r+1)})$. Let $\mathfrak{J}^{(k,r+1)}\coloneqq \{\bm{s}\in\overbar{\mathfrak{J}}^{(k,r+1)}\mid \bm{v}_{\bm{s}}^{(k,r+1)}\neq\bm{0}\}$. Set $r \coloneqq r + 1$ and go to the beginning of Step 1-2; \label{enum:is_exist}
\item Otherwise, let $\bm{X}^{(k+1,0)} \coloneqq \bm{X}^{(k,r)}$ and $\mathfrak{J}^{(k+1,0)} \coloneqq \mathfrak{J}^{(k,r)}$.\label{enum:isnot_exist}
\end{enumerate}
\end{description}
\item[\bf Step 2.]If $\gamma_k \le \tau$, stop the algorithm. Otherwise, set $k \coloneqq k + 1$ and go to Step 1.
\end{description}
\end{algorithm}
\end{appendices}

\end{document}